\def\>{\relax\ifmmode\mskip.666667\thinmuskip\relax\else\kern.111111em\fi}
\def\<{\relax\ifmmode\mskip-.333333\thinmuskip\relax\else\kern-.0555556em\fi}
\def\vsk#1>{\vskip#1\baselineskip}
\def\vv#1>{\vadjust{\vsk#1>}\ignorespaces}
\def\vvn#1>{\vadjust{\nobreak\vsk#1>\nobreak}\ignorespaces}
\newtheorem*{corr}{Corollary of Theorems 1.4 and 1.7}
\newtheorem{thm}{Theorem}[section]
\newtheorem{cor}[thm]{Corollary}
\newtheorem{lem}[thm]{Lemma}
\newtheorem{Rem}[thm]{Remark}
\theoremstyle{definition}                                  
\numberwithin{equation}{section}
\theoremstyle{definition}
\numberwithin{equation}{section}
\definecolor{deepjunglegreen}{rgb}{0.0, 0.29, 0.29}
\definecolor{darkspringgreen}{rgb}{0.09, 0.45, 0.27}
\pretocmd\section{\Needspace*{4\baselineskip}}{}{}
  \newcommand{\ga}{\gamma}
\newcommand{\Ga}{\Gamma}  
 \newcommand{\ka}{\kappa}
\newcommand{\om}{\omega} \newcommand{\Om}{\Omega}
\newcommand{\bF}{{\mathbb F}}
\newcommand{\bK}{{\mathbb K}}
\newcommand{\bZ}{{\mathbb Z}}
\newcommand{\nc}{\newcommand}
\nc\wh{\widehat}
\nc\on{\operatorname}
\nc\Gr{\on{Gr}}
\nc\Fl{\on{Fl}}
 \DeclareMathOperator{\Spf}{{Spf}}
\newcommand{\limto}{{\displaystyle\lim_{\longrightarrow}}}
\newcommand{\rightlim}{\mathop{\limto}}
\newcommand{\leftlim}{\mathop{\displaystyle\lim_{\longleftarrow}}}
\newcommand{\limfromn}{\leftlim\limits_{\raise3pt\hbox{$n$}}}
\newcommand{\limton}{\rightlim\limits_{\raise3pt\hbox{$n$}}}
\newcommand{\rightlimit}[1]{\mathop{\lim\limits_{\longrightarrow}}\limits%
                    _{\raise3pt\hbox{$\scriptstyle #1$}}}
\newcommand{\leftlimit}[1]{\mathop{\lim\limits_{\longleftarrow}}\limits%
                    _{\raise3pt\hbox{$\scriptstyle #1$}}}
\newcommand{\Rmnum}[1]{\expandafter\@slowromancap\romannumeral #1@}
\theoremstyle{definition}
\def\vsk#1>{\vskip#1\baselineskip}
\def\<{\relax\ifmmode\mskip-.333333\thinmuskip\relax\else\kern-.0555556em\fi}
\nc{\bea}{\begin{eqnarray*}}
\nc{\eea}{\end{eqnarray*}}
\nc{\bean}{\begin{eqnarray}}
\nc{\eean}{\end{eqnarray}}
\let\bi\bibitem
\let\der\partial
\let\mc\mathcal
\def\F{{\mathbb F}}
\def\C{{\mathbb C}}
\def\ox{{\otimes}}
\def\Z{\mathbb Z}
\renewcommand{\thefootnote}{[\arabic{footnote}]}
\def\K{\mathbb K}
\def\Spf{{\on{Spf}}}
\begin{document}

\title[On $p$-adic solutions to KZ equations]
{On $p$-adic solutions to   KZ  equations, ordinary crystals, and $p^s$-hypergeometric solutions}
 \author[Alexander Varchenko]{Alexander Varchenko$^{\star}$}
\author[Vadim Vologodsky]{Vadim Vologodsky$^\circ$}

\maketitle

\begin{center}

{\it $^{\star}$Department of Mathematics, University
of North Carolina at Chapel Hill\\ Chapel Hill, NC 27599-3250, USA\/}

\vsk.5>

{\it ${}^\circ$ Department of Mathematics, Princeton University \\ Princeton,
NJ 08540, USA\/}

\end{center}

{\let\thefootnote\relax
\footnotetext{\vsk-.8>\noindent
{$^\star$\sl E-mail}:\enspace anv@email.unc.edu
\\
{$^\circ$\sl E-mail}:\enspace vologod@gmail.com,
supported by a Simons Foundation Investigator Grant No. 622511 through
Bhargav Bhatt.
}}

\begin{abstract}

In this paper we consider  the KZ connection associated with a family 
of hyperelliptic curves of genus $g$ over the 
 ring of $p$-adic integers $\Z_p$.  Then the dual connection
 is the Gauss-Manin connection of that family. We observe that the Gauss-Manin connection
 has an ordinary $F$-crystal structure  and its  unit root subcrystal is of rank $g$. 
We prove that all local flat sections of the KZ connection annihilate the unite root subcrystal,
and the space of all local flat sections of the KZ connection is a free $\Z_p$-module of rank $g$.

 We also consider the reduction modulo $p^s$ of the unit root subcrystal for any $s\geq 1$.
  We prove that its annihilator  is generated by the so-called 
$p^s$-hypergeometric flat sections of the KZ connection.  In particular, that means that
 the reduction 
modulo $p^s$ of an arbitrary local flat section of the KZ connection over $\Z_p$ is a linear combination
of the $p^s$-hypergeometric flat sections.

\end{abstract}

{\small \tableofcontents  }

\newpage

\section{Introduction}
\label{sec 1}

The Knizhnik-Zamolodchikov (KZ)  
equations were discovered in conformal field theory, see \cite{KZ, EFK}. 
Versions of these equations have been observed 
  in representation theory, algebraic geometry, combinatorics, field theory, \dots\
In \cite{CF,DF,DJMM,SV1} the KZ equations were identified with equations for flat sections of suitable Gauss-Manin connections, and solutions to the KZ equations were constructed in the form of multidimensional hypergeometric integrals.

\smallskip

In \cite{SV2,V3}, the KZ equations were considered modulo $p^s$ where $p$ is a prime number,
and polynomial solutions to the KZ equations modulo $p^s$ were constructed by an elementary 
procedure as suitable $p^s$-approximations of the hypergeometric integrals.

\smallskip

In \cite{VV}, the question was addressed of whether all solutions to the KZ equations in characteristic $p$ are generated by the $p$-hypergeometric functions. The affirmative answer was demonstrated in a non-trivial example.

\smallskip

In \cite{V3,VZ},  the KZ equations associated with a family of hyperelliptic curves of genus $g$ were studied. 
It was demonstrated that the module of $p^s$-hypergeometric solutions has a $p$-adic limit as $s$ tends to infinity. 
This limit is a KZ-invariant rank $g$ subbundle $\mc W$ of the associated KZ connection.

\smallskip

In this paper we describe how the  $p^s$-hypergeometric solutions and  subbundle $\mc W$ are related to the ordinary $F$-crystal 
of the Gauss-Manin connection of that family of hyperelliptic curves.  
More precisely,  we consider  the KZ connection associated with the family 
of hyperelliptic curves  over the 
 ring of $p$-adic integers $\Z_p$.  Then the dual connection
 is the Gauss-Manin connection of that family. We observe that the Gauss-Manin connection
 has an ordinary $F$-crystal structure  and its  unit root subcrystal is of rank $g$. 
We prove that all local flat sections of the KZ connection annihilate the unite root subcrystal,
 the space of all local flat sections of the KZ connection is a free $\Z_p$-module of rank $g$,
 the space of all local flat sections generate the subbundle $\mc W$.

\smallskip

 We also consider the reduction modulo $p^s$ of the unit root subcrystal for any $s\geq 1$.
  We prove that its annihilator  is generated by the 
$p^s$-hypergeometric flat sections of the KZ connection.  In particular, that means that
 the reduction 
modulo $p^s$ of an arbitrary local flat section of the KZ connection over $\Z_p$ is a linear combination
of the $p^s$-hypergeometric solutions. 

\smallskip

The general project is to analyze solutions of arbitrary KZ equations over 
$\Z_p$ and relate them to $p^s$-hypergeometric solutions and associated 
$F$-crystals. At this stage, we consider first non-trivial examples to establish a map 
of the interrelations between these concepts.

\subsection{KZ equations}
\label{sec 1.1}

Let $g$ be a positive integer. Denote $n=2g+1$.

In this paper we consider the following example of the KZ equations.
Let $\Om_{ij}$ be the $n\times n$-matrix $(c_{kl})$ with 
$c_{ij} = c_{ji}=-c_{ii}= -c_{jj}=1$ and all other entries equal to $0$.
Let $z=(z_1,\dots,z_n)$. 
Define 
\bea
H_i(z) = \sum_{j \ne i}
   \frac{\Omega_{ij}}{z_i - z_j}  ,
\quad i = 1, \dots , n,
\eea
the Gaudin Hamiltonians.
Consider the system of differential and algebraic equations for 
 column vectors  $I(z)=(I_1(z)$, \dots, $I_{n}(z))^\intercal$:
 \bean
\label{KZ}
\phantom{aaaaaa}
2\der_i I \ - \
   \,H_i(z) \,I \,=\,0,
\quad i = 1, \dots , n,
\quad
I_1(z)+\dots+I_{n}(z)=0,
\eean
where $\der_i$ denotes $\frac{\partial }{\partial z_i}$; \
$M^\intercal$ denotes the transpose matrix of a matrix $M$.

\smallskip

Let $\mathbb A^n$ be affine space with coordinates $z_1,\dots,z_n$.
Let
$S$ be the complement to the union of all diagonal hyperplanes defined by equations
$z_i=z_j$, $i\ne j$.
The KZ equations define a flat KZ connection $\nabla^{\on{KZ}}$
on the trivial bundle $S\times V\to S$ whose fiber is
the vector space $V$ of $n$-vectors with the zero sum of coordinates.
Denote this bundle by $\mc V$ and call the pair $(\mc V, \nabla^{\on{KZ}})$ the KZ bundle.

\begin{Rem}

The system of equations \eqref{KZ} is the system of the original KZ equations 
associated with the Lie algebra $\frak{sl}_2$ and the subspace of singular vectors of weight $n-2$
of the tensor power $(\C^2)^{\ox n}$  of the two-dimensional irreducible $\frak{sl}_2$-module, up to a gauge
transformation, see this example in \cite[Section 1.1]{V1}. The original KZ equations depend on a parameter $\ka$. 
In our paper,  $\ka=2$, see the coefficient of $\der_i$ in \eqref{KZ}.

\end{Rem}

\subsection{Gauss-Manin connection}

Consider the family  $X$ of hyperelliptic curves $X(z)$ defined by the affine equation
\bean
\label{hE}
y^2=(x-z_1)\dots(x-z_n).
\eean
For $z\in S$, the curve $X(z)$ is  a smooth projective curve of genus $g$. 
The KZ connection $\nabla^{\on{KZ}}$
 is dual  to the Gauss-Manin connection  $\nabla^{\on{GM}}$
over $S$ with fiber  $H^1_{dR}(X(z))$.

\smallskip

For example,  the flat sections of the  KZ equations over the field $\C$ are labeled by flat sections
$\ga(z)$ of the Gauss-Manin connection over $S$ with fiber $H_1(X(z),\C)$
and  given by the following construction.
 Define the master function
\bean
\label{mast f}
\Phi(x,z) = \big((x-z_1)\dots(x-z_n)\big)^{-1/2}
\eean
and  the ${n}$-vector  of hyperelliptic   integrals
\bean
\label{Iga}
I^{(\ga)} (z)=(I_1(z),\dots,I_n(z))^\intercal,\qquad I_j=
\int_{\ga(z)} \frac{dx}{(x-z_j)y}
=
\int_{\ga(z)} \frac{\Phi(x,z)dx}{x-z_j}\,.
\eean

\begin{thm}
\label{thm1.1}

 The vector $I^{(\ga)}(z)$ satisfies  the KZ equations \eqref{KZ}.
All complex solutions to  the KZ equations  have this form.
The complex vector space of solutions of the form \eqref{Iga} is $n-1$-dimensional.

\end{thm}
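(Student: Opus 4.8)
The statement has three parts and I would treat them in turn. For the first (that $I^{(\ga)}$ solves \eqref{KZ}) the plan is a direct computation with period integrals. Write $f(x)=\prod_{l=1}^{n}(x-z_l)$, so $\Phi=f^{-1/2}$ and $\frac{dx}{(x-z_j)y}=\frac{\Phi\,dx}{x-z_j}$. Two elementary identities on the curve $X(z)$ do all the work: from $\sum_j\frac1{x-z_j}=\frac{f'}{f}$ one gets $\sum_{j}\frac{\Phi\,dx}{x-z_j}=-2\,d\Phi$, an exact $1$-form on $X(z)$; and from $\der_i f=-f/(x-z_i)$ one gets $\der_i\Phi=\frac{\Phi}{2(x-z_i)}$. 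Choosing locally on $S$ a continuously varying family of $1$-cycles $\ga(z)$ on $X(z)$ avoiding the poles of the integrand (possible because $X\to S$ is a locally trivial fibration, so after a $C^\infty$ local trivialization $\ga$ may be taken constant while the integrand becomes a holomorphically $z$-dependent family of closed forms of the second kind), integration of the first identity over $\ga(z)$ gives the algebraic relation $I_1+\dots+I_n=0$, and differentiation under the integral sign — legitimate for such period integrals — together with the partial fraction $\frac1{(x-z_i)(x-z_j)}=\frac1{z_i-z_j}\bigl(\frac1{x-z_i}-\frac1{x-z_j}\bigr)$, gives $\der_i I_j=\frac{I_i-I_j}{2(z_i-z_j)}$ for $i\ne j$. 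Since $\Om_{ij}$ acts on a vector $I$ by $(\Om_{ij}I)_i=-I_i+I_j$, $(\Om_{ij}I)_j=I_i-I_j$, and $(\Om_{ij}I)_k=0$ for $k\notin\{i,j\}$, this is precisely the $k\ne i$ row of $2\der_i I=H_iI$; the $k=i$ row then follows by differentiating $\sum_kI_k=0$. This way one never has to integrate by parts the double pole that occurs when $i=j$.

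For the last two assertions the plan is to compare $\nabla^{\on{KZ}}$ with the Gauss–Manin connection via the de Rham period pairing. Fix a simply connected open $U\subset S$. Since $\nabla^{\on{KZ}}$ is flat on a bundle of rank $\dim V=n-1$, its space $\mathrm{Sol}(U)$ of flat sections has dimension exactly $n-1$. Likewise the flat sections over $U$ of the Gauss–Manin connection on the bundle with fiber $H_1(X(z),\C)$ form a space $\mathcal L(U)$ of dimension $2g=n-1$, namely the locally constant families of cycles $\ga(z)$. By the first part, $\ga\mapsto I^{(\ga)}$ is a $\C$-linear map $\Psi\colon\mathcal L(U)\to\mathrm{Sol}(U)$ whose image is, by definition, the space of solutions of the form \eqref{Iga}. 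It therefore suffices to show $\Psi$ is injective: then $\dim\mathcal L(U)=\dim\mathrm{Sol}(U)=n-1$ forces $\Psi$ to be an isomorphism, which gives at once that every solution is of the form \eqref{Iga} and that the space of these is $(n-1)$-dimensional.

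To prove $\Psi$ injective, suppose $I^{(\ga)}\equiv0$ on $U$. Then for each $z\in U$ the period $\int_{\ga(z)}\frac{\Phi(x,z)\,dx}{x-z_j}$ vanishes for every $j$, i.e. $\ga(z)$ pairs to zero with each class $[\Phi\,dx/(x-z_j)]\in H^1_{\dR}(X(z))$. But these $n$ classes span $H^1_{\dR}(X(z))$, with the unique relation $\sum_j[\Phi\,dx/(x-z_j)]=0$ coming from $-2\,d\Phi$: this is the classical description of the de Rham cohomology of a hyperelliptic curve (the $\Phi\,dx/(x-z_j)$ are forms of the second kind, and a residue/dimension count — or reduction to the standard basis $x^k\,dx/y$, $0\le k\le 2g-1$ — shows that $(a_j)\mapsto\sum_j a_j[\Phi\,dx/(x-z_j)]$ identifies $V$ with $H^1_{\dR}(X(z))$). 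Since the period pairing $H_1(X(z),\C)\times H^1_{\dR}(X(z))\to\C$ is perfect, $\ga(z)=0$ in $H_1(X(z),\C)$ for all $z\in U$, hence $\ga\equiv 0$.

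The main obstacle is in the first part: carrying out the differentiation of the period integral rigorously — producing the locally constant family of cycles, checking that the integrand is genuinely a holomorphically varying family of closed forms on the abstract curves, and that no boundary terms arise because $\ga(z)$ is a closed cycle — and, in the injectivity step, the classical but not entirely formal fact that the $n$ forms $\Phi\,dx/(x-z_j)$ already exhaust $H^1_{\dR}(X(z))$ modulo the single relation. Once these two points are in place, the rest is the linear algebra of flat connections of equal rank together with perfectness of the period pairing.
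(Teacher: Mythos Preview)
Your argument is correct. The paper does not actually prove Theorem~\ref{thm1.1}; it cites it as a classical statement from \cite[Section~1.1]{V1}. However, the ingredients you use are exactly those the paper later records (without proof, as ``well-known'') in Lemma~\ref{lem coh} and Corollary~\ref{cor dual}: your identities $\der_i I_j=\frac{I_i-I_j}{2(z_i-z_j)}$ and $\sum_j[\om_j]=0$ are precisely \eqref{der ij} and \eqref{sumom}, and your injectivity step is Lemma~\ref{lem coh}(i) combined with the perfectness of the period pairing. So your route and the paper's implicit route coincide; the only difference is that you carry out the period-integral computation on the homology side (differentiating $I^{(\ga)}$ directly), while the paper phrases the same computation on the cohomology side as the Gauss--Manin action on the classes $[\om_j]$, then dualizes.
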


Theorem \ref{thm1.1} is a classical statement discussed as an example in  \cite[Section 1.1]{V1}, also see
\cite[Formula (1.3)]{V1}.

 The solutions in \eqref{Iga} are called the hypergeometric solutions to the KZ equations.

\subsection{Solutions modulo $p^s$}

Let $p$ be an odd prime number.   For an integer $s\geq 1$ define the {\it master polynomial}
\bea
\Phi_s(x,z) = \big((x-z_1)\dots(x-z_n)\big)^{(p^{s}-1)/2}
\eea
which can be considered as a $p^s$-approximation of the master function $\Psi(x,z)$.
 For $\ell=1,\dots, g$, define the column $n$-vector
\bea
Q^{s,\ell}(z) = (Q^{s,\ell}_1,\dots, Q^{s,\ell}_n)^\intercal
\eea
as the coefficient of $x^{\ell p^{s}-1}$ in the column $n$-vector of polynomials 
$\big(\frac{\Phi_s(x,z)}{x-z_1}, \dots, \frac {\Phi_s(x,z)}{x-z_n}\big)^\intercal$.
If $\ell >g$,  then the polynomial
$\frac{\Phi_s(x,z)}{x-z_i}$ does not have the monomial 
$x^{\ell p^{s}-1}$.

\vsk.2>

\begin{thm} 
[{\cite[Theorems 4.4]{V3},  \cite[Theorem 6.9]{VZ}}]
\label{thm V2}

The column $n$-vectors  $Q^{s,\ell}(z),$  $\ell=1,\dots,g$, of polynomials in $z$ are solutions of the  KZ equations
\eqref{KZ}  modulo $p^{s}$. They generate a free $(\Z/p^s)[z]$-module of rank $g$.

\end{thm}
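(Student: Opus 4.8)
The plan is to verify the two assertions separately: first that each $Q^{s,\ell}(z)$ satisfies \eqref{KZ} modulo $p^s$, and then that the $g$ vectors are $(\Z/p^s)[z]$-linearly independent, in fact generate a free rank-$g$ module. For the first part I would imitate the classical computation that underlies Theorem \ref{thm1.1}: the hypergeometric integrand $\frac{\Phi(x,z)}{x-z_j}$ satisfies, coefficientwise in a formal expansion, the KZ relations because $2\der_{z_i}\Phi = \frac{\Phi}{x-z_i}$ and the Gaudin Hamiltonian acts by the residue identities $\sum_{j\ne i}\frac{1}{(z_i-z_j)}\big(\tfrac{1}{x-z_i}-\tfrac{1}{x-z_j}\big) = -\tfrac{1}{(x-z_i)^2}$, together with the fact that $\der_x\big(\tfrac{\Phi}{(x-z_i)}\big)$ is an exact differential so its relevant Taylor/Laurent coefficient vanishes. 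The key point is that over $\Z/p^s$ one has $2\der_{z_i}\Phi_s \equiv \tfrac{p^s-1}{2}\cdot(-2)\cdot\tfrac{\Phi_s}{x-z_i}\equiv \tfrac{\Phi_s}{x-z_i}\pmod{p^s}$ (since $p^s-1\equiv -1$), so $\Phi_s$ plays the role of $\Phi$ exactly. One then extracts the coefficient of $x^{\ell p^s-1}$ in the polynomial identity obtained by applying $2\der_{z_i}-H_i(z)$ to the vector $\big(\tfrac{\Phi_s}{x-z_1},\dots,\tfrac{\Phi_s}{x-z_n}\big)^\intercal$; the "derivative" term that would normally spoil things is the coefficient of $x^{\ell p^s-1}$ in $\der_x(\Phi_s/(x-z_i))$, which is $(\ell p^s - 1 + 1)\cdot(\text{coeff of }x^{\ell p^s})\equiv 0\pmod{p^s}$ because the multiplier is $\ell p^s$. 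The algebraic constraint $\sum_j Q^{s,\ell}_j = 0$ follows because $\sum_j \tfrac{\Phi_s}{x-z_j} = \der_x\Phi_s\cdot(\text{unit})$ — more precisely $\sum_j \tfrac{1}{x-z_j} = \tfrac{\der_x P}{P}$ with $P=(x-z_1)\cdots(x-z_n)$, so $\sum_j \tfrac{\Phi_s}{x-z_j} = \tfrac{2}{p^s-1}\der_x\Phi_s$ up to sign, and again the coefficient of $x^{\ell p^s-1}$ in a derivative is divisible by $p^s$.

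For the second part — freeness and rank $g$ — I would argue by reduction modulo $p$ and a degree/leading-term analysis. Modulo $p$, $\Phi_s(x,z)$ reduces to $\Phi_1(x,z)^{p^{s-1}}$ up to the Frobenius-twist identity $(x-z_i)^{p^s}\equiv (x^p - z_i^p)^{p^{s-1}}$, hence the reduction mod $p$ of $Q^{s,\ell}$ relates to $Q^{1,\ell}$ by a Frobenius substitution $z_i\mapsto z_i^{p^{s-1}}$, which is injective on $(\Z/p)[z]$. So it suffices to show the $Q^{1,\ell}(z)$, $\ell=1,\dots,g$, are $(\Z/p)[z]$-linearly independent; then a standard lifting/Nakayama argument over the local ring $\Z/p^s$ (the module is generated by $g$ elements and its mod-$p$ reduction is free of rank $g$, so it is itself free of rank $g$) finishes the claim. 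The linear independence of the $Q^{1,\ell}$ I would establish by specializing $z$ to a convenient point or by examining the distinct "diagonal" monomials: the vector $Q^{1,\ell}$ is the $x^{\ell p - 1}$-coefficient, and by looking at the top-degree behavior in a single variable $z_j$ one can exhibit, for each $\ell$, a monomial in $z$ appearing in $Q^{1,\ell}_j$ that does not appear in $Q^{1,\ell'}_j$ for $\ell'\ne\ell$, forcing independence.

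The main obstacle I anticipate is the bookkeeping in the first part: carefully justifying that passing from the formal/transcendental identity (valid for $\Phi$ over $\C$) to the polynomial identity for $\Phi_s$ over $\Z/p^s$ does not introduce denominators divisible by $p$. One must check that all the "universal" constants occurring — the $2$ in front of $\der_i$, the residue coefficients $\tfrac{1}{z_i-z_j}$, the exponent $\tfrac{p^s-1}{2}$ — either are $p$-adic units (true since $p$ is odd, so $2$ is invertible mod $p^s$) or are cleared before reduction, and that the only genuinely new ingredient, the vanishing of $x^{\ell p^s-1}$-coefficients of $x$-derivatives, is exactly what the factor $\ell p^s$ provides. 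Since this is precisely the content already worked out in \cite[Theorem 4.4]{V3} and \cite[Theorem 6.9]{VZ}, I would present the verification concisely and refer to those sources for the detailed identities, focusing the exposition on the structural points: Frobenius descent mod $p$ and the Nakayama lifting over $\Z/p^s$.
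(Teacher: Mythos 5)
First, note that the paper does not prove this statement: Theorem \ref{thm V2} is imported by citation from \cite[Theorem 4.4]{V3} and \cite[Theorem 6.9]{VZ}, so your sketch can only be judged on its own merits. Your first half (that each $Q^{s,\ell}$ solves \eqref{KZ} modulo $p^s$) is essentially the correct and standard argument of \cite{SV2,V3}: the identity $2\der_i\Phi_s\equiv\Phi_s/(x-z_i)\pmod{p^s}$, the partial-fraction identity $\frac{1}{(x-z_i)(x-z_j)}=\frac{1}{z_i-z_j}\bigl(\frac{1}{x-z_i}-\frac{1}{x-z_j}\bigr)$, and the observation that the coefficient of $x^{\ell p^s-1}$ in an $x$-derivative carries the factor $\ell p^s$. (Your displayed ``residue identity'' $\sum_{j\ne i}\frac{1}{z_i-z_j}(\frac{1}{x-z_i}-\frac{1}{x-z_j})=-\frac{1}{(x-z_i)^2}$ is not literally true --- the left side depends on all $z_j$ --- but the mechanism you intend is the right one, and the off-diagonal components in fact come out divisible by $p^s$ identically, without extracting coefficients.)

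The second half has two genuine gaps. First, your claim that modulo $p$ the vector $Q^{s,\ell}$ is obtained from $Q^{1,\ell}$ by the substitution $z_i\mapsto z_i^{p^{s-1}}$ is false: since $\tfrac{p^s-1}{2}=\tfrac{p-1}{2}(1+p+\dots+p^{s-1})\neq\tfrac{p-1}{2}p^{s-1}$, one has $\Phi_s\equiv\Phi_1(x,z)\,\Phi_{s-1}(x^p,z^p)\pmod p$, and extracting the coefficient of $x^{\ell p^s-1}$ yields $Q^{s,\ell}\equiv\sum_m M_{\ell m}(z^{p})\,Q^{1,m}\pmod p$, where $M$ is an iterated (Frobenius-twisted) Hasse--Witt--type matrix. (Already for $g=1$, $n=3$, $p=3$, $s=2$ one finds $Q^{2,1}\equiv-(z_1^3+z_2^3+z_3^3)\,Q^{1,1}$, not $Q^{1,1}(z^3)$.) To pass independence from the $Q^{1,m}$ to the $Q^{s,\ell}$ one therefore needs $\det M\neq 0$ in $\F_p[z]$, i.e.\ exactly the ordinarity/nondegeneracy of the Hasse--Witt matrix established in \cite[Section 5.1]{V3}; this is a substantive input you have silently assumed. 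Second, your argument for the independence of the $Q^{1,\ell}$ over $\F_p[z]$ --- exhibiting a monomial occurring in $Q^{1,\ell}_j$ but not in $Q^{1,\ell'}_j$ --- proves nothing over a polynomial ring: the vectors $(1,z_1)$ and $(z_2,z_1z_2)$ have this property componentwise yet are $\F_p[z]$-dependent. What is actually needed is a nonzero $g\times g$ minor of the matrix $(Q^{1,\ell}_j)$ (equivalently, independence over the fraction field $\F_p(z)$), which in the cited sources comes from an explicit Cartier--Manin/binomial-coefficient computation. Your reduction from $\Z/p^s$ to $\F_p$ (factor out the maximal power of $p$ from a putative relation and reduce) is fine, but both nonvanishing statements it rests on are left unproved.
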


\vsk.2>

The
 $n$-vectors  $Q^{s,\ell}(z)$, $\ell=1,\dots,g$, are called 
 the {\it $p^{s}$-hypergeometric solutions} to the KZ equations
\eqref{KZ}.

\subsection{Formal $p$-adic schemes over $\Z_p$}
\label{sec 1.4}

Recall some definitions.
A formal $p$-adic scheme $Y/\Z_p$ is a collection of schemes $Y_s/(\Z/p^s)$ together with isomorphisms
\bea
Y_{s+1} \times_ {\on{Spec}\Z/p^{s+1}} \on{Spec}\Z/p^s\simeq Y_s\,.
\eea
If $f_1,\dots,f_k\in \Z_p[z_1,\dots,z_n]$, then
\bea
 \Z_p[z_1,\dots,z_n, f_1^{-1},\dots,f_k^{-1}]^{\wedge}_p
\ :\,=\ 
\lim_{\leftarrow} (\Z/p^s)[z_1,\dots,z_n, f_1^{-1},\dots,f_k^{-1}]\,
 \eea
is the $p$-adic completion of 
$ \Z_p[z_1,\dots,z_n, f_1^{-1},\dots,f_k^{-1}]$.
The formal $p$-adic  scheme associated with $ \Z_p[z_1,\dots,z_n, f_1^{-1},\dots,f_k^{-1}]^{\wedge}_p$
is denoted by
\bea
\on{Spf} \,\Z_p[z_1,\dots,z_n, f_1^{-1},\dots,f_k^{-1}]^{\wedge}_p.
\eea
Let
$$
\frak S = \Spf \, \bZ_p [z_1,\dots, z_n, (z_i-z_j)^{-1},\; 1\leq i\ne j \leq n]^{\wedge}_p
$$
be the formal $p$-adic scheme of the  complement to the union of diagonal hyperplanes $z_i - z_j=0$, 
$1\leq i, j\leq n$.  

In what follows we consider the KZ bundle $\mc V$ over the formal $p$-adic scheme
$\frak S$ and its subschemes and reductions modulo $p^s$.

\smallskip

In  \cite[Section 5.1]{V3},\ 
 a $g\times g$-matrix $A(z)$ was defined with entries in $\Z[z]$ and  
determinant having a nonzero projection  to $\F_p[z]$. The matrix $A(z)$ projected to $\F_p[z]$
is the Hasse--Witt matrix of the curve $X(z)$ considered over $\F_p$. The fact that $\det A(z)$ is nonzero 
over $\F_p$ means that  the  curve $X(z)$ is ordinary in the terminology of \cite{DI}.

\smallskip

Let
$$
\frak D = \Spf \, \bZ_p [z_1,\dots,z_n,  (\det A)^{-1}, (z_i-z_j)^{-1},\; 1\leq i\ne j \leq n]^{\wedge}_p
$$
be the open formal $p$-adic subscheme of $\frak S$ defined by $\det A(z)\ne 0$.

 Denote 
\bean
\label{R1}
\mathcal{O}(\frak S)
&=&
\bZ_p [z_1,\dots, z_n, (z_i-z_j)^{-1},\; 1\leq i\ne j \leq n]^{\wedge}_p\,.
\\
\notag
\mathcal{O}(\frak D)
&=& 
\bZ_p [z_1,\dots,z_n,  (\det A)^{-1}, (z_i-z_j)^{-1},\; 1\leq i\ne j \leq n]^{\wedge}_p\,.
\eean

\smallskip

According to \cite[Section 6.5]{VZ}, for any $s\geq 1$,  
the column vectors
$Q^{s,\ell}(z)$, $\ell=1,\dots,g$, generate a rank $g$ flat subbundle of the KZ bundle  
$\mc V_s$ over $\frak D_s$. The subbundle is denoted by  $\mc W_s$\,.
According to \cite[Theorem 6.9]{VZ}, we have   $\mc W_{s+1}|_{\frak D_s} = \mc W_{s}$ as subbundles  of $\mc V_s$.
Thus, the sequence of bundles  $\mc W_s$ has a $p$-adic limit as $s\to\infty$,  denoted by $\mc W$, where $\mc W\subset \mc V |_{\frak D}$.

\subsection{$F$-crystal}
\label{sec 1.5}

Consider our family $X$ of hyperelliptic curves over $\frak S$.
Let $H^1_{dR}(X/\frak S)$  be the relative de Rham cohomology of the family.
 This is a bundle of rank $2g$   equipped with the Gauss-Manin  connection $\nabla^{\on{GM}}$.
We denote  it by  $\mc E$.

\smallskip

We identify $\mc V$ with $\mc E^*$ dual to $\mc E$,
cf. formula \eqref{Iga}.
  This identification allows us to identify the flat sections of the
KZ connection with the $\nabla^{\on{GM}}$-flat linear functionals $I : \mc E\to \mc O(\frak S)$.

\smallskip

The map $z_i \mapsto z_i^p$, $i=1,\dots,n$, extends to a well-defined algebra endomorphism
$F: \mc O(\frak S)\to \mc O(\frak S)$. 
Denote 
\bea
F^*\mc E = \mc O(\frak S)\ox_{\mc O(\frak S),F} \mc E .
\eea
Thus, $F^*\mc E$ is the quotient of $\mc O(\frak S)\otimes _\bZ \mc E$
by the subgroup generated by $r\otimes r'v- F(r')r\otimes v$, $r,r' \in \mc O(\frak S)$, $v\in \mc E$.
Let 
\bea
\phi : F^*\mc E \to \mc E
\eea
 be the crystalline Frobenius, see \cite{BO}. The  morphism $\phi$ respects the connections. 
 The pair $(\mc E, \phi)$ is called an $F$-crystal.

\smallskip

The $F$-crystal $(\mc E, \phi)$ restricted to $\frak D$ is ordinary in the sense of \cite{DI} since 
$\det A(z)\ne 0$ on $\frak D$.  In particular, by 
\cite{DI},  there is  a unique  exact sequence of bundles with connections over $\frak D$, 
\bean
\label{cry}
0 \to \mc U 	\hookrightarrow  \mc E \to \mc E /\mc U\to 0,
\eean
such that 
\begin{enumerate}
\item[(i)]

$\on{rank}\,\mc U=g$,

\item[(ii)]

$\phi\, \vert_{\mc F^*\mc U} : F^*\mc U\to \mc U$ is an isomorphism,

\item[(iii)]
$\phi\, \vert_{F^*(\mc E /\mc U)} 
: F^*(\mc E /\mc U)\to \mc E /\mc U$
has the form $p\cdot \phi_1$, where $\phi_1: F^*(\mc E /\mc U)\to \mc E /\mc U$  is an isomorphism, and $p\,\cdot$
\ is multiplication by $p$.

\item[(iv)]
$\mc U$ equals  $\bigcap _i \phi ^i((F^i)^* \mc E)\subset \mc E$, where $\phi^i$ is viewed as $(F^i)^* \mc E \to \mc E$.

\end{enumerate}

Note that (iv) is implied by (ii) and (iii). 

The pair $(\mc U,\phi)$ is called the unit root subcrystal of $(\mc E, \phi)$. We refer to sequence \eqref{cry} as  the slope sequence.

\smallskip

The KZ bundle $\mc V = \mc E^*$ has the dual $F$-crystal structure,
\bean
\label{dFs}
                    p\cdot (\phi^*)^{-1} : F^*(\mc E^*) \to \mc E^*,
                    \eean
denoted by $(\mc V, \psi)$. 
Here $\phi^*$ is the morphism dual to $\phi$. The restriction
of the dual $F$-crystal to $\frak D$ is ordinary.
Let 
\bean
\label{des}
0 \to \mc U' 	\hookrightarrow  \mc V \to \mc V/\mc U'\to 0
\eean
be the corresponding unique  exact sequence of bundles with connections over $\frak D$
with properties (i)-(iv). Here  $\mc U'$ is the annihilator of $\mc U$.

\subsection{$F$-crystal and $p^s$-hypergeometric sections}

Recall the $p^s$-hypergeometric solutions  $Q^{s,\ell},$  $\ell=1,\dots,g$,
and the bundle $\mc W_s$ generated by the them.

Let $\mc U'_s :=\mc U' \ox (\Z/p^s)$ be the reduction of   $ \mc U'$  modulo $p^s$.

\begin{thm}
\label{thm m1}

For $s\geq 1$, the $p^s$-hypergeometric solutions  $Q^{s,\ell},$  $\ell=1,\dots,g$, 
form a flat basis of the $\Ga(\frak D_s, \mc O_{\frak D_s})$-module   $\mc U'_s$, that is, $\mc W_s=\mc U'_s$.

\end{thm}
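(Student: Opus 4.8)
Since it is already known (Theorem~\ref{thm V2} together with \cite[Section~6.5]{VZ}) that the columns $Q^{s,\ell}$, $\ell=1,\dots,g$, form a flat basis of the subbundle $\mc W_s\subseteq\mc V_s$, the content of the theorem is the equality $\mc W_s=\mc U'_s$. Both $\mc W_s$ and $\mc U'_s$ are locally free direct summands of $\mc V_s$ of rank $g$ — for $\mc W_s$ by \cite[Section~6.5]{VZ}, and for $\mc U'_s=\mc U'\otimes(\Z/p^s)$ because \eqref{des} is an exact sequence of bundles over $\frak D$. Hence it is enough to prove one inclusion, $\mc W_s\subseteq\mc U'_s$: granting that, the induced surjection $\mc V_s/\mc W_s\twoheadrightarrow\mc V_s/\mc U'_s$ of locally free sheaves of the same rank $g$ is an isomorphism, forcing $\mc U'_s/\mc W_s=0$. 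Finally, since $\mc U'$ is by definition the annihilator of $\mc U$ and all the sheaves in play are locally free, $\mc U'_s$ is exactly the annihilator in $\mc V_s$ of $\mc U_s:=\mc U\otimes(\Z/p^s)\subseteq\mc E_s$, where $\mc E_s=\mc E\otimes(\Z/p^s)$. Thus the theorem reduces to the assertion that \emph{each $p^s$-hypergeometric section $Q^{s,\ell}$ annihilates $\mc U_s$.}

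The next step rewrites $\mc U_s$ Frobenius-theoretically. By property (ii) of the slope sequence \eqref{cry}, $\phi^s$ restricts to an isomorphism $(F^s)^*\mc U\to\mc U$, and by property (iii) the map it induces on the quotient $(F^s)^*(\mc E/\mc U)\to\mc E/\mc U$ is $p^s$ times an isomorphism; feeding this into \eqref{cry} gives
\[
\mc U\ \subseteq\ \phi^s\bigl((F^s)^*\mc E\bigr)\ \subseteq\ \mc U+p^s\mc E,
\]
so that $\phi^s\bigl((F^s)^*\mc E\bigr)$ has image exactly $\mc U_s$ in $\mc E_s$. Consequently $Q^{s,\ell}$ annihilates $\mc U_s$ if and only if $\langle\widetilde Q^{s,\ell},\phi^s(v)\rangle\equiv 0\pmod{p^s}$ for every $v\in(F^s)^*\mc E$ and any lift $\widetilde Q^{s,\ell}\in\mc V$ of $Q^{s,\ell}$, i.e. $(\phi^s)^{*}\widetilde Q^{s,\ell}\equiv 0$ in $(F^s)^*\mc V_s$, where $(\phi^s)^{*}$ is the transpose of $\phi^s$. (In terms of the dual $F$-crystal \eqref{dFs}, $\ker\bigl((\phi^s)^{*}\bmod p^s\bigr)$ is just the annihilator $\mc U'_s$ of $\mc U_s$, so this reformulation only repackages the goal; the substance is in what follows.) The crux is an explicit description, modulo $p^s$, of the $s$-fold crystalline Frobenius $\phi^s$ on $H^1_{dR}(X/\frak S)$ in terms of the master polynomial $\Phi_s(x,z)=\bigl((x-z_1)\cdots(x-z_n)\bigr)^{(p^s-1)/2}$ — the relevant power being forced by $(p^s-1)/2=\tfrac{p-1}{2}(1+p+\dots+p^{s-1})$: modulo $p^s$, $\phi^s$ is controlled by multiplication of a form $g(x)\,dx/y$ by $\Phi_s(x,z)$ followed by de Rham reduction of $\Phi_s(x,z)\,g(x)\,dx/y$ to the standard basis $x^{i-1}dx/y$, $i=1,\dots,2g$, modulo exact forms — this is where the matrix $A(z)$ of \cite[Section~5.1]{V3} enters, and $\frak D$ is precisely the locus on which the needed reduction is invertible. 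Pairing with $\widetilde Q^{s,\ell}$ and using that $Q^{s,\ell}_j$ is the coefficient of $x^{\ell p^s-1}$ in $\Phi_s(x,z)/(x-z_j)$ — a monomial absent there for $\ell>g$ — the quantity $\langle\widetilde Q^{s,\ell},\phi^s(v)\rangle$ turns into extraction of fixed coefficients from products of powers of $(x-z_1)\cdots(x-z_n)$, reduced modulo the de Rham relations, and the arithmetic of those coefficients forces the result into $p^s\mc O(\frak S)$. Carrying this out — the $p$-adic, all-orders refinement of the classical mod-$p$ fact that the $p$-hypergeometric relations cut out the Hasse–Witt (unit-root) locus — is the main obstacle.

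An alternative packaging passes to the $p$-adic limit. By \cite{VZ} the $\mc W_s$ form a compatible system with limit $\mc W\subseteq\mc V|_{\frak D}$ a flat subbundle of rank $g$ satisfying $\mc W\otimes(\Z/p^s)=\mc W_s$; hence $\mc W_s=\mc U'_s$ for all $s$ is equivalent to the single identity $\mc W=\mc U'$. For the latter it suffices to check that $\mc W$ is Frobenius-stable for the dual $F$-crystal, $\psi(F^*\mc W)=\mc W$ — which again requires the master-polynomial description of $\psi$, so the crux is not sidestepped; once this holds, $\mc W$ is a rank-$g$ flat sub-$F$-crystal of $(\mc V,\psi)$ all of whose slopes are $0$, and by functoriality and uniqueness of the slope filtration on the ordinary locus (\cite{DI}) it must coincide with the unit root subcrystal $\mc U'$. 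In either route the one nonformal ingredient is the description of crystalline Frobenius modulo $p^s$ via $\Phi_s$, from which the agreement with the $p^s$-hypergeometric sections $Q^{s,\ell}$ is read off; everything else is the linear algebra of the slope decomposition and a rank count.
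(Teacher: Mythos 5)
Your reduction of the theorem to the single claim ``each $Q^{s,\ell}$ annihilates $\mc U_s$'' is correct and is exactly the skeleton of the paper's argument (both $\mc W_s$ and $\mc U'_s$ are rank-$g$ direct summands, so one inclusion plus a rank count suffices), and your identification of $\mc U_s$ with the mod-$p^s$ image of $\phi^s\bigl((F^s)^*\mc E\bigr)$ via properties (ii)--(iii) is also right. But at that point you declare the actual vanishing $\langle Q^{s,\ell},\phi^s(v)\rangle\equiv 0\pmod{p^s}$ to be ``the main obstacle'' and do not prove it: you propose to establish it by writing down an explicit master-polynomial formula for the $s$-fold crystalline Frobenius modulo $p^s$ and then doing coefficient arithmetic, but no such formula is derived and no computation is carried out. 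Your alternative route through the $p$-adic limit and uniqueness of the slope filtration requires, as you yourself note, the same unproven Frobenius-stability of $\mc W$. So the heart of the theorem is missing.

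The paper closes this gap without any explicit description of $\phi^s$, by splitting the vanishing into two soft facts. First, let $U\subset X$ be the affine curve and $\mc G_s=\ker\bigl(H^1(X_s/\frak D_s)\to H^1(U_s/\frak D_s)\bigr)$. Every class in $H^1(U_s/\frak D_s)$ is represented by a global $1$-form, so the crystalline Frobenius on $H^1(U_s/\frak D_s)$ is divisible by $p$ (divisibility of $\phi$ on the Hodge filtration); hence $\phi^s\bigl((F^s)^*\mc E\bigr)$ dies in $H^1(U_s/\frak D_s)$ modulo $p^s$, i.e.\ $\mc U_s\subseteq\mc G_s$. Second, the functionals $Q^{s,\ell}$, assembled into the generalized Cartier map $C_s:[\om_i]\mapsto(c_{p^s-1,i},\dots,c_{gp^s-1,i})$ where $y^{p^s}\om_i=\sum_k c_{k,i}(z)x^k\,dx$, kill $\mc G_s$: if $\sum_i g_i\om_i=\frac{\partial f}{\partial x}\,dx$ is exact on $U$, then $\sum_{i,k}g_ic_{k,i}x^k\,dx\equiv d\bigl(y^{p^s}f\bigr)\pmod{p^s}$, and an $x$-derivative has no $x^{\ell p^s-1}$ coefficient modulo $p^s$ since $\frac{\partial}{\partial x}x^{\ell p^s}=\ell p^s x^{\ell p^s-1}$. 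Surjectivity of $C_s$ (linear independence of the $Q^{s,\ell}$) gives $\on{rank}\,\mc G_s\leq g$, so $\mc G_s=\mc U_s$ and the $Q^{s,\ell}$ annihilate $\mc U_s$. This argument replaces your proposed ``$p$-adic all-orders refinement'' of the Hasse--Witt computation with the elementary Lemma \ref{lem cdf} plus the standard Hodge-divisibility of Frobenius; you would need to supply one or the other to have a complete proof.
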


Theorem \ref{thm m1} is proved in Section \ref{sec pr1}.

\begin{cor}
\label{cor W=U'} 

The KZ bundle $\mc U'$ is the $p$-adic limit of the sequence of KZ bundles 
$\mc W_s$  as $s\to\infty$.
\qed

\end{cor}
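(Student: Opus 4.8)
The plan is to deduce the Corollary formally from Theorem \ref{thm m1} by passing to the inverse limit over $s$, the only genuine content being the interchange of the $p$-adic limit with reduction modulo $p^s$. Recall from Section \ref{sec 1.4} that $\mc W$ is \emph{defined} as the $p$-adic limit $\lim_{\leftarrow s}\mc W_s$ inside $\mc V|_{\frak D}$, the compatibility $\mc W_{s+1}|_{\frak D_s}=\mc W_s$ of \cite[Theorem 6.9]{VZ} making $\{\mc W_s\}$ a genuine inverse system with surjective (reduction) transition maps. On the other side, $\mc U'_s=\mc U'\otimes(\Z/p^s)$ is by definition the reduction of the $\mc O(\frak D)$-subbundle $\mc U'\subset\mc V|_{\frak D}$, so $\{\mc U'_s\}$ is likewise an inverse system whose structure maps are the canonical reductions and whose term at level $s$ is $\mc U'_s$.

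First I would invoke Theorem \ref{thm m1}, which gives $\mc W_s=\mc U'_s$ as subbundles of $\mc V_s=\mc V|_{\frak D_s}$ for every $s\geq 1$. Since both identifications take place inside the common ambient system $\mc V_s$ and both use the canonical reduction maps, the two inverse systems $\{\mc W_s\}$ and $\{\mc U'_s\}$ coincide term by term together with their transition maps. Taking inverse limits therefore yields
\[
\mc W=\lim_{\leftarrow s}\mc W_s=\lim_{\leftarrow s}\mc U'_s
\]
as subbundles of $\mc V|_{\frak D}=\lim_{\leftarrow s}\mc V_s$. It then remains only to identify $\lim_{\leftarrow s}\mc U'_s$ with $\mc U'$ itself. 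Here I would use that $\mc O(\frak D)$ is a Noetherian $p$-adically complete ring, being the $p$-adic completion of a localization of $\Z_p[z_1,\dots,z_n]$ as in \eqref{R1}, and that $\mc U'$ is a finitely generated $\mc O(\frak D)$-module, being a subbundle of the finite free bundle $\mc V|_{\frak D}$ of rank $2g$. A finitely generated module over such a ring is $p$-adically complete, whence $\mc U'=\lim_{\leftarrow s}\mc U'/p^s\mc U'=\lim_{\leftarrow s}\mc U'_s$. Combining this with the display gives $\mc W=\mc U'$ as subbundles of $\mc V|_{\frak D}$, which is the assertion of the Corollary.

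I expect the only delicate point to be the justification that reduction modulo $p^s$ recovers each term from the limit, i.e. that $\lim_{\leftarrow}$ commutes with $\otimes(\Z/p^s)$ for these subbundles. This is guaranteed by the local freeness of $\mc W_s$ and $\mc U'_s$ (both of constant rank $g$) together with the surjectivity of the transition maps: the relevant inverse system satisfies the Mittag--Leffler condition, the $\lim^1$ terms vanish, and so reduction of the limit returns the level-$s$ term. Everything else is bookkeeping inside the $p$-adically complete ambient bundle $\mc V|_{\frak D}$, so once Theorem \ref{thm m1} is in hand the Corollary is immediate.
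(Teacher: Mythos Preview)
Your argument is correct and is exactly what the paper intends: in the paper the corollary carries only a \qed, i.e.\ it is regarded as immediate from Theorem~\ref{thm m1} together with the identification $\mc U'=\varprojlim_s \mc U'_s$ for a finite locally free module over the $p$-adically complete ring $\mc O(\frak D)$. One small remark: you do not need to invoke \cite[Theorem~6.9]{VZ} to know that $\{\mc W_s\}$ is a compatible inverse system---once Theorem~\ref{thm m1} gives $\mc W_s=\mc U'_s$, the compatibility $\mc W_{s+1}|_{\frak D_s}=\mc W_s$ is automatic from $\mc U'_{s+1}\otimes(\Z/p^s)=\mc U'_s$, and indeed the paper points out right after the corollary that this furnishes a \emph{new} proof of \cite[Theorem~6.9]{VZ} independent of the Dwork-type congruences used there.
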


Theorem \ref{thm m1} and Corollary \ref{cor W=U'} give a new proof of \cite[Theorem 6.9]{VZ} that  the sequence of bundles 
$\mc W_s$ has a $p$-adic limit as $s\to\infty$, and the limit is a KZ-invariant subbundle of $\mc V\vert_{\frak D}$. 
Notice that the proof in \cite{VZ} is based on matrix Dwork--type
congruences, while the proof of Theorem \ref{thm m1} is based on properties of ordinary $F$-crystals.

\subsection{Pairing on $\mc U'$}

We have  the Poincare pairing on  $\mc E = H^1_{dR}(X/\frak S)$ and the dual
Poincare  pairing $(\cdot,\cdot)$ on
$\mc V= \mc E^*$.  Using properties (ii), (iii), and the fact that $(\phi(a), \phi(b))=p\cdot(a,b)$ for all $a,b \in \mc V$, we conclude that  
$(\cdot,\cdot)\vert_{\mc U'} =0$ and obtain the following corollary of Theorem \ref{thm m1}.

\begin{cor} 
For   $ s\geq 1$, the $p^s$-hypergeometric solutions  $Q^{s,\ell},$  $\ell=1,\dots,g$, generate a Lagrangian subbundle of
$\mc V_s$ with respect to the Poincare pairing $(\cdot,\cdot)$.
\qed

\end{cor}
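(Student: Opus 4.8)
The plan is to deduce the final Corollary directly from Theorem \ref{thm m1} together with the compatibility of the Poincar\'e pairing with the $F$-crystal structure, so the real content is the parenthetical claim that $(\cdot,\cdot)\vert_{\mc U'} = 0$. First I would recall that the Poincar\'e pairing on $\mc E = H^1_{dR}(X/\frak S)$ is a perfect alternating pairing which, under the crystalline Frobenius $\phi$, satisfies the twisting relation $(\phi(a),\phi(b)) = p\cdot(a,b)$ (equivalently, on $\mc V = \mc E^*$ the dual pairing $(\cdot,\cdot)$ satisfies the same identity with $\psi$ in place of $\phi$, which is exactly the normalization built into \eqref{dFs}). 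I would then restrict everything to $\frak D$, where the $F$-crystal is ordinary and the slope sequence \eqref{des} is available with $\mc U'$ of rank $g$ and $\psi\vert_{F^*\mc U'}\colon F^*\mc U' \to \mc U'$ an isomorphism.

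The key step is the following self-duality argument. Let $a, b \in \mc U'$ be local sections over (an affine piece of) $\frak D$. Since $\psi\vert_{F^*\mc U'}$ is an isomorphism of the rank-$g$ bundle $F^*\mc U'$ onto $\mc U'$, I may write $a = \psi(a')$, $b = \psi(b')$ for local sections $a', b'$ of $F^*\mc U'$, and these in turn are $\mc O(\frak D)$-combinations of elements of the form $1\otimes u$ with $u \in \mc U'$. Applying the twist relation gives $(a,b) = (\psi(a'),\psi(b')) = p\cdot(a',b')'$, where $(\cdot,\cdot)'$ denotes the induced pairing on $F^*\mc V$ pulled back along $F$; concretely $(a,b) = p\cdot F\big((\tilde a,\tilde b)\big)$ for suitable lifts, so $(a,b) \in p\,\mc O(\frak D)$. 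Iterating $k$ times using property (iv) (or simply re-applying the isomorphism $\psi\vert_{F^*\mc U'}$) shows $(a,b) \in p^k\,\mc O(\frak D)$ for every $k\ge 1$. Because $\mc O(\frak D)$ is $p$-adically separated (it is a $p$-adic completion, hence $\bigcap_k p^k\mc O(\frak D) = 0$), this forces $(a,b) = 0$. Hence $(\cdot,\cdot)\vert_{\mc U'} = 0$, i.e.\ $\mc U'$ is isotropic; since $\operatorname{rank}\mc U' = g = \tfrac12\operatorname{rank}\mc V$ and the pairing is perfect, $\mc U'$ is Lagrangian.

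Finally I would invoke Theorem \ref{thm m1}: for each $s\ge 1$ it identifies $\mc W_s$, the subbundle generated by the $p^s$-hypergeometric solutions $Q^{s,\ell}$, $\ell=1,\dots,g$, with $\mc U'_s = \mc U'\otimes(\Z/p^s)$. Reducing the vanishing $(\cdot,\cdot)\vert_{\mc U'}=0$ modulo $p^s$ gives $(\cdot,\cdot)\vert_{\mc U'_s}=0$, and transporting through the identification $\mc W_s = \mc U'_s$ yields that the $Q^{s,\ell}$ span a rank-$g$ subbundle of $\mc V_s$ on which the (reduced) Poincar\'e pairing vanishes identically. Since the reduced pairing on $\mc V_s$ is still perfect and $\operatorname{rank}\mc V_s = 2g$, a rank-$g$ isotropic subbundle is Lagrangian, which is the assertion.

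The main obstacle is making the ``infinite $p$-divisibility forces vanishing'' step fully rigorous in the Frobenius-twisted setting: one must be careful that pairing values genuinely land in $\mc O(\frak D)$ (not merely in a localization or a completed tensor product over $F$), that the factor of $p$ really is pulled out at each stage rather than absorbed, and that $\mc O(\frak D)$ is $p$-torsion-free and $p$-adically complete so that $\bigcap_k p^k\mc O(\frak D)=0$ applies — all of which hold here because $\mc O(\frak D)$ in \eqref{R1} is the $p$-adic completion of a localization of $\Z_p[z_1,\dots,z_n]$, hence flat over $\Z_p$ and $p$-adically separated. Everything else (self-duality of $\mc E$, the twist relation, and the reduction mod $p^s$) is standard once Theorem \ref{thm m1} is in hand.
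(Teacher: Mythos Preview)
Your proposal is correct and follows essentially the same route as the paper. The paper's entire argument is the sentence immediately preceding the corollary: from the twist relation $(\phi(a),\phi(b))=p\cdot(a,b)$ together with properties (ii), (iii) of the slope filtration one concludes $(\cdot,\cdot)\vert_{\mc U'}=0$, and then Theorem~\ref{thm m1} gives $\mc W_s=\mc U'_s$; your write-up simply spells out the iteration-and-$p$-adic-separation step that the paper leaves implicit.
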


One can calculate the pairing $(\cdot,\cdot)$ explicitly.
Then equations
$(Q^{s,\ell},Q^{s,m}) = 0$, $\ell, m=1,\dots,g$, become a system of explicit congruences modulo $p^s$
 for rational functions in $z$, cf. orthogonal relations \cite[Formula (1.15)]{VV} for $p$-hypergeometric solutions.

\subsection{Local flat sections}
\label{sec local}

Here  we discuss the KZ  bundle locally. Let $\bK$ be a field of characteristic $p$, $W(\bK)$ its ring of Witt vectors. The reader may assume that $\bK=\bF_p$ and then $W(\bF_p)=\bZ_p$.

\smallskip

Choose a point $a=(a_1,\dots,a_n) \in \frak D(W(\bK))$.
 Denote  $t_i=z_i-a_i$,  $t=(t_1,\dots,t_n)$,
 \bea
 R = W(\bK)[[t_1,\dots,t_n]], \qquad  
\mc V_{\on{loc} }= \{ (v_1,\dots,v_n)\in R^{\oplus n} \mid v_1+\dots + v_n=0\}.
\eea
 Formulas \eqref{KZ} well-define a KZ connection $\nabla^{\on{KZ}}$
on $\mc V_{\on{loc} }$.  We study the flat sections of this connection.
 
\smallskip

Let $H^1_{dR}(X/R)$  be the relative de Rham cohomology of our family of hyperelliptic curves.
 This is a free $R$-module  of rank $2g$
 equipped with the Gauss-Manin  connection $\nabla^{\on{GM}}$. 
We denote it 
 by $\mc E_{\on{loc}}$.

\smallskip

We identify the $R$-module $\mc V_{\on{loc} }$ with the $R$-module $\mc E_{\on{loc} }^*$ dual to $\mc E_{\on{loc} }$
and identify
 the KZ connection $\nabla^{\on{KZ}}$ with the connection on  $\mc E_{\on{loc} }^*$ dual 
to the Gauss-Manin connection $\nabla^{\on{GM}}$.
  This identification allows us to identify the flat sections of the
KZ connection with the $\nabla^{\on{GM}}$-flat linear functionals $I : \mc E_{\on{loc} } \to R$.

\smallskip

Consider the algebra endomorphism  $F : R\to R$ defined by the formula $t_i \mapsto t_i^p$, $i=1,\dots,n$. 
Let $(\mc E_{\on{loc} }, \phi)$ be the associated $F$-crystal.
The $F$-crystal $(\mc E_{\on{loc}}, \phi)$ is ordinary since $a\in\frak D(W(\bK))$. According to \cite{DI},
there is a unique exact sequence  of $R$-modules with connections, 
\bean
\label{cryl}
0 \to \mc U_{\on{loc} } 	\hookrightarrow  \mc E_{\on{loc} } \to \mc E _{\on{loc} }/\mc U_{\on{loc} }\to 0.
\eean
such that 
\begin{enumerate}
\item[(i)]

$\on{rank}\,\mc U_{\on{loc} }=g$,

\item[(ii)]
$\phi\, \vert_{\mc F^*\mc U_{\on{loc} }} : F^*\mc U_{\on{loc} }\to \mc U_{\on{loc} }$ is an isomorphism,

\item[(iii)]
$\phi\, \vert_{F^*(\mc E_{\on{loc} } /\mc U_{\on{loc} })} 
: F^*(\mc E_{\on{loc} } /\mc U_{\on{loc} })\to \mc E _{\on{loc} }/\mc U_{\on{loc} }$
has the form $p \cdot\phi_1$, where $\phi_1: F^*(\mc E_{\on{loc} } /\mc U_{\on{loc} })\to \mc E_{\on{loc} } /\mc U_{\on{loc} }$ 
 is an isomorphism.  
\item[(iv)]
$\mc U_{\on{loc} }$ equals  $\bigcap _i \phi ^i((F^i)^* \mc E_{\on{loc} })\subset \mc E_{\on{loc} }$, 
where $\phi^i$ is viewed as $(F^i)^* \mc E_{\on{loc} } \to \mc E_{\on{loc} }$\,,

\item[(v)]
 $\mc U_{\on{loc} }$ has a $\nabla^{\on{GM}}$-flat  basis.

\end{enumerate}
Compare this list with properties (i)-(iv) in Section \ref{sec 1.5} and notice the new property (v).

\smallskip

The second main result  of this paper is the following theorem.

\begin{thm}
\label{thm m2}

${}$

\begin{enumerate}

\item[$\on{(i)}$]
If  $I : \mc E_{\on{loc} } \to R$ is a flat section of the KZ bundle, then
$I\vert_{\mc U_{\on{loc} }} = 0$.

\item[$\on{(ii)}$]  

The space of flat sections of the KZ connection on $\mc V_{\on{loc} }$ is a free $W(\bK)$-module of rank $g$.

\end{enumerate}
\end{thm}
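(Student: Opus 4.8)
The plan is to argue throughout with the identification, recalled just above, between flat sections of $\nabla^{\on{KZ}}$ on $\mc V_{\on{loc}}$ and $\nabla^{\on{GM}}$-flat functionals $I\colon\mc E_{\on{loc}}\to R$ (i.e.\ flat sections of $\mc V_{\on{loc}}=\mc E_{\on{loc}}^{*}$), and to extract both assertions from the slope sequence \eqref{cryl} together with the crystalline Frobenius $\phi$. First I would dispose of (ii) assuming (i). If (i) holds, every flat functional annihilates $\mc U_{\on{loc}}$, hence is a flat section of the annihilator $\mc U'_{\on{loc}}:=\mathrm{Ann}(\mc U_{\on{loc}})\subset\mc V_{\on{loc}}$, the dual unit-root subcrystal of $(\mc V_{\on{loc}},\psi)$ as in \eqref{des}. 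The Poincaré pairing on $\mc E_{\on{loc}}$ is $\nabla^{\on{GM}}$-flat and satisfies $\langle\phi a,\phi b\rangle=p\langle a,b\rangle$; iterating this shows $\langle\mc U_{\on{loc}},\mc U_{\on{loc}}\rangle\subseteq\bigcap_i p^iR=0$, so $\mc U_{\on{loc}}$ is Lagrangian, and the pairing identifies $\mc U'_{\on{loc}}$ with $\mc U_{\on{loc}}$ as bundles with connection (up to a Tate twist, which does not change flat sections). By property (v) the module $\mc U_{\on{loc}}$ has a $\nabla^{\on{GM}}$-flat basis $u_1,\dots,u_g$, so its flat sections form the free $W(\bK)$-module $\bigoplus_j W(\bK)u_j$ of rank $g$; transporting back along the pairing yields (ii).

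The substance is (i). The key auxiliary input is a descent lemma: every $\nabla^{\on{GM}}$-flat functional on $F^{*}\mc E_{\on{loc}}$ equals $F^{*}(I')$ for a unique flat functional $I'$ on $\mc E_{\on{loc}}$. This is the standard Frobenius-iteration argument: horizontality forces the differential of such a functional to be divisible by $p$ and to lie in the image of $F^{*}$ on $1$-forms, and one reconstructs $I'$ by a recursion that converges both $t$-adically and $p$-adically. Granting this, let $I$ be a flat functional. Since $\phi$ is horizontal, $I\circ\phi$ is a flat functional on $F^{*}\mc E_{\on{loc}}$, so $I\circ\phi=F^{*}(I_{1})$ with $I_{1}$ flat; iterating, $I\circ\phi^{i}=(F^{i})^{*}(I_{i})$ with $I_{i}$ flat and $\lvert I_{i}\rvert\le\lvert I\rvert$ (Gauss norm; $\phi$ is integral). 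Restricting to $\mc U_{\on{loc}}$ and using that $\phi\vert_{F^{*}\mc U_{\on{loc}}}$ is an isomorphism with integral inverse, one obtains for the constant vectors $\vec c^{(i)}:=(I_{i}(u_{j}))_{j}\in W(\bK)^{g}$ the relation $C(t)^{\mathsf T}\vec c^{(i)}=\sigma(\vec c^{(i+1)})$, where $C(t)\in GL_{g}(R)$ is the matrix of $\phi\vert_{\mc U_{\on{loc}}}$ in the flat basis. In particular $C(t)^{\mathsf T}\vec c^{(i)}$ is independent of $t$ for every $i$.

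It then remains to show that this forces $\vec c^{(0)}=(I(u_{j}))_{j}=0$, i.e.\ $I\vert_{\mc U_{\on{loc}}}=0$. This is where the geometry enters: $C(t)$ reduces modulo $p$ to a frame change of the Hasse--Witt matrix $A(z)$ of the curves $X(z)$, which is invertible on $\frak D$; and for this family $C(t)$ admits no nonzero constant covector, i.e.\ there is no $0\neq v$ with $(C(t)-C(0))^{\mathsf T}v=0$ identically in $t$ (the unit-root $F$-crystal is non-isotrivial along $\frak D$, which can be read off from the explicit polynomial entries of $A(z)$ in \cite{V3}). Hence $\vec c^{(0)}=0$, which is (i); combined with the first paragraph this gives the theorem. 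The hard part is exactly this last step: one cannot read off "divisibility by $p^{i}$" from property (iii) alone, because the non-unit-root quotient $\mc E_{\on{loc}}/\mc U_{\on{loc}}$ does carry nonzero $\nabla^{\on{GM}}$-flat sections over $R$ — it is only their failure to lift to $\mc E_{\on{loc}}$ that is available — so one genuinely needs the non-degeneracy of the unit-root Frobenius (equivalently, a Dwork-type control of the $p$-adic denominators of the non-unit-root flat sections), together with the descent lemma that makes the iteration above legitimate.
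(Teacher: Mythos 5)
Your reduction of (ii) to (i) via the Poincar\'e pairing (isotropy of $\mc U_{\on{loc}}$, hence $\mc U'_{\on{loc}}\simeq\mc U_{\on{loc}}$ as modules with connection, which by property (v) has a flat basis) is sound and close in spirit to what the paper extracts from \cite{DI}. The proof of (i), however, has a fatal gap in its final step. You write the matrix $C(t)$ of $\phi\vert_{F^*\mc U_{\on{loc}}}$ in the \emph{flat} basis $u_1,\dots,u_g$ and claim that the relation ``$C(t)^{\mathsf T}\vec c^{\,(i)}$ is constant'' forces $\vec c^{\,(0)}=0$ because $(C(t)-C(0))^{\mathsf T}v=0$ has no nonzero constant solution. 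But since $\phi$ is horizontal and the $u_j$ are flat, $\phi(1\otimes u_j)$ is itself a flat section of $\mc U_{\on{loc}}$, hence a $W(\bK)$-combination of the $u_k$; that is, $dC=0$ and $C(t)=C(0)$ identically. The non-isotriviality you invoke is a property of the Hasse--Witt matrix in the basis of differential forms; the passage to the flat basis absorbs all $t$-dependence, so your relation is vacuously satisfied for every constant vector $\vec c^{\,(0)}$ and yields no contradiction. (The auxiliary ``descent lemma'' for flat functionals on $F^*\mc E_{\on{loc}}$ is also asserted rather than proved, but even granting it the iteration produces nothing.)

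The paper's actual mechanism is entirely different and its key input is absent from your proposal: the theorem is bootstrapped from the characteristic-$p$ result of \cite{VV} that the space of flat sections of $\mc V_{\on{loc}}\ox\F_p$ is exactly $g$-dimensional and spanned by the flat basis of the annihilator of $\mc U_{\on{loc}}$. Concretely, one takes the Deligne--Illusie canonical form $\nabla^{\on{KZ}}(e_i)=\sum_j d\log(q_{ij})\ox u_j$ with $q_{ij}\in R^\times$, observes that the $e_i$-coefficients $b_i$ of a flat section are constants, writes $(b_i)=p^s(b_i')$ with $(b_i')$ nonzero mod $p$, and uses a Cartier-operator lemma ($C^{s+1}$ annihilates a closed form $\eta$ over $\bK[[t]]$ whenever $p^s\eta$ is exact, while $C$ fixes $d\log$ forms up to $p$-th roots of the coefficients) to manufacture from such a section a mod-$p$ flat section with a nonzero $e_i$-component, contradicting \cite{VV}. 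Some argument of this kind --- passing to characteristic $p$ and using the exact dimension count there, or an equivalent $p$-curvature/Kodaira--Spencer input --- is needed; the unit-root Frobenius alone, as you use it, cannot see the obstruction.
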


Theorem \ref{thm m2} is 
deduced  in Section \ref{sec pr2} from 
 paper \cite{VV} where it is shown that the space of solutions of
the KZ equations \eqref{KZ} in characteristic $p$ is $g$-dimensional.
For convenience of the reader we sketch the argument from   \cite{VV} in Remark \ref{rem:sketch}.

 \begin{corr}
 
Let $I^1,\dots,I^g$  be a basis of the module of flat sections of $\mc V_{\on{loc}}$\,.
Then
there exists
a unique matrix $(b_{s,j}^i) \in
\on{Mat}_g((W(\bK)/p^s)[[t]])$,  such that $\det (b_{s,j}^i) \not\equiv 0 $ modulo $p$, and
\bean
\label{ibq}
I^i\ \equiv \ \sum_{j=1}^g\, b_{s,j}^i\, Q^{s,j} \pmod{p^s}, \qquad i=1,\dots, g. 
\eean
Moreover, each $b_{s,j}^i$ is a quasi-constant, that is $db_{s,j}^i \equiv 0 \pmod{p^s}$.
\qed

 \end{corr}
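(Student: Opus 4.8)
The plan is to combine Theorem \ref{thm m1} (which identifies $\mc W_s = \mc U'_s$, equivalently says that $Q^{s,1},\dots,Q^{s,g}$ form a flat basis of $\mc U'_s$ over $\Ga(\frak D_s,\mc O_{\frak D_s})$) with Theorem \ref{thm m2}(i), which says every flat section $I$ of the KZ connection annihilates the unit root subcrystal $\mc U_{\on{loc}}$, hence factors through $\mc E_{\on{loc}}/\mc U_{\on{loc}}$, i.e. lies in $\mc U'_{\on{loc}}$, the annihilator of $\mc U_{\on{loc}}$. First I would pull back the global statement $\mc W_s = \mc U'_s$ of Theorem \ref{thm m1} along the map $\Spf R/p^s \to \frak D_s$ induced by the chosen point $a \in \frak D(W(\bK))$; since formation of the slope/unit-root filtration commutes with this base change (the ordinary $F$-crystal structure and properties (i)--(iv) are stable under base change along $\mc O(\frak D)\to R$, using that $F$ is compatible), we get that $Q^{s,1},\dots,Q^{s,g}$ form a flat basis of the free $R/p^s$-module $\mc U'_{\on{loc}}\ox(\Z/p^s) = \mc U'_{\on{loc},s}$.

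Next, by Theorem \ref{thm m2}(i) each $I^i$ lands in $\mc U'_{\on{loc}}$, so its reduction $I^i \bmod p^s$ lands in $\mc U'_{\on{loc},s}$, which has $\{Q^{s,j}\}$ as a basis over $R/p^s = (W(\bK)/p^s)[[t]]$. Expanding, there is a unique matrix $(b^i_{s,j})\in \on{Mat}_g\big((W(\bK)/p^s)[[t]]\big)$ with $I^i \equiv \sum_j b^i_{s,j} Q^{s,j}\pmod{p^s}$. For the nondegeneracy claim $\det(b^i_{s,j})\not\equiv 0 \pmod p$: reduce everything mod $p$; by Theorem \ref{thm m2}(ii) the $I^i$ reduce to a basis of the $g$-dimensional space of characteristic-$p$ solutions (this is exactly the input from \cite{VV}), while the $Q^{1,j} = Q^{s,j}\bmod p$ also form a basis of that same space (Theorem \ref{thm V2} with $s=1$, together with \cite[Section 6.5]{VZ}). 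Hence the change-of-basis matrix $(b^i_{s,j}\bmod p)$ is invertible over $\bK[[t]]$, i.e. $\det(b^i_{s,j})$ is a unit mod $p$, in particular nonzero.

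Finally, the quasi-constancy $d b^i_{s,j}\equiv 0 \pmod{p^s}$ follows by differentiating the congruence \eqref{ibq}. Apply $\nabla^{\on{KZ}}$ (equivalently, the relevant flat connection on $\mc V_{\on{loc},s}$): both $I^i$ and $Q^{s,j}$ are flat modulo $p^s$, so $0 \equiv \sum_j (d b^i_{s,j}) \otimes Q^{s,j} \pmod{p^s}$ in $\Omega^1 \ox \mc V_{\on{loc},s}$; since the $Q^{s,j}$ are an $R/p^s$-basis of $\mc U'_{\on{loc},s}$ and the $d b^i_{s,j}$ are scalar $1$-forms, linear independence forces $d b^i_{s,j}\equiv 0\pmod{p^s}$. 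Uniqueness of the matrix is immediate from the basis property. The main obstacle I anticipate is the bookkeeping in the base-change step: one must check carefully that the unit-root subcrystal $\mc U'$ over $\frak D$, characterized by (i)--(iv), restricts to the locally-defined $\mc U'_{\on{loc}}$ of Section \ref{sec local} (which additionally satisfies (v)) — i.e. that the intersection $\bigcap_i \phi^i((F^i)^*\mc E)$ is compatible with the completion $\mc O(\frak D)\to R$ at $a$. This is where all the $p$-adic-analytic care is concentrated; the rest is linear algebra over $(W(\bK)/p^s)[[t]]$.
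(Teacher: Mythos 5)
Your proposal follows exactly the route the paper intends: the corollary is stated without a written proof precisely because it is meant to be read off from Theorems \ref{thm m1} and \ref{thm m2} in the way you describe, and your existence, uniqueness, and quasi-constancy steps are fine. One step is softer than you make it sound, namely the nondegeneracy of $\det(b^i_{s,j})$ modulo $p$. The ``$g$-dimensional space of characteristic-$p$ solutions'' is not a $g$-dimensional $\bK$-vector space but a free rank-$g$ module over the ring of quasi-constants $\bK[[t_1^p,\dots,t_n^p]]$ (if $u$ is flat, so is $t_1^p u$), so the assertion that the $\bar I^i$ and the $\bar Q^{s,j}$ are ``two bases of the same space, hence related by an invertible matrix'' needs care: $g$ elements of a rank-$g$ module that are merely $\bK$-linearly independent need not form a basis over $\bK[[t^p]]$ (compare $u_1$ and $t_1^p u_1$). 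The clean way to close this is the one the paper's proof of Theorem \ref{thm m2} already sets up: $\mc U'_{\on{loc}}$ has a flat $R$-basis $u_1,\dots,u_g$, and over $R=W(\bK)[[t]]$ a quasi-constant is an honest constant (no $p$-torsion), so the module of flat sections is exactly $\bigoplus_j W(\bK)\,u_j$ and $I^i=\sum_j c^i_j u_j$ with $(c^i_j)\in \on{GL}_g(W(\bK))$; since $\bar u_j=\sum_k m_{jk}\bar Q^{s,k}$ with $(m_{jk})\in \on{GL}_g(\bK[[t]])$ (two $\bK[[t]]$-bases of $\mc U'_{\on{loc}}\ox\F_p$), one gets that $\det(\bar b)=\det(\bar c)\det(m)$ is a unit in $\bK[[t]]$, which is more than the statement requires. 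Separately, in your base-change step note that the Frobenius lifts on $\mc O(\frak D)$ (namely $z_i\mapsto z_i^p$) and on $R$ (namely $t_i\mapsto t_i^p$) are genuinely different, so it is not that ``$F$ is compatible''; what you need is that the unit-root filtration of an ordinary crystal is canonical, i.e.\ independent of the chosen lift, which is the content of \cite{DI}. With these two repairs your argument is complete and coincides with the intended one.
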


\begin{Rem}
   One can show that  for any $s>1$, there exists a flat map $\mc E_{\on{loc} } \to R/p^s$ such that 
    $I\vert_{\mc U_{\on{loc} }} \ne 0$,     cf. part (i) of Theorem \ref{thm m2}.
    That is, for any $s>1$, not every solution  to the KZ equation modulo $p^s$ 
    is a linear combination of $p^s$-hypergeometric solutions.
    
    It is shown in \cite{VV}, that every flat map $\mc E_{\on{loc} } \to R/p$
is a linear combination of    $p$-hypergeometric solutions.
\end{Rem}

\begin{Rem}
It is an interesting problem to choose a natural basis $I^1,\dots,I^g$ and describe the matrix $(b^i_{s,j})$ explicitly. Cf.
\cite[Corollary 7.4]{V2},
\cite[Theorem 9.8]{SlV},
\cite[Theorem 1.2]{SmV}.

\end{Rem}

\section{Proof of Theorem \ref{thm m1}}
\label{sec pr1}

\subsection{Some cohomology classes}

Let $\nabla^{\on{GM}}_i :=\nabla^{\on{GM}}_{\frac{\der}{\der z_i}}$
denote the differentiation with respect to the Gauss-Manin connection along the vector field 
$\frac\der{\der z_i}$.
Define the differential forms
\bea
\om_i  = \frac{dx}{(x-z_i)y}\,,\quad i=1,\dots, n.
\eea
The following facts are well-known.

\begin{lem}
\label{lem coh}

${}$

\begin{enumerate}

\item[$\on{(i)}$]
The elements  $[\om_i] \in H^1_{dR}(X/\frak S)$, $i=1,\dots, n-1$, form a basis of  
$H^1_{dR}(X/\frak S)$ and
\bean
\label{sumom}  
[\om_1]+\dots + [\om_n]=0.
\eean

\item[$\on{(ii)}$]

We have
\bean
\label{der ij}
\nabla^{\on{GM}}_i\,[\om_j] &=& - \frac 12\,\frac {[\om_i]-[\om_j]}{z_i-z_j}\,, \quad i\ne j,
\qquad
\nabla^{\on{GM}}_i\,[\om_i] = \frac 12\,\sum_{j\ne i}\,\frac {[\om_i]-[\om_j]}{z_i-z_j}\,.
\eean

\end{enumerate}

\end{lem}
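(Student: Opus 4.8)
The plan is to handle the two assertions of (i) separately and then obtain (ii) by a direct differentiation. For the relation: each $\om_i=\tfrac{dx}{(x-z_i)y}$ is a differential of the second kind (a double pole with vanishing residue at the Weierstrass point over $z_i$, regular elsewhere), hence defines a class in $H^1_{dR}(X/\frak S)$. Since $\sum_i\tfrac1{x-z_i}=\tfrac{f'(x)}{f(x)}$ with $f(x)=(x-z_1)\cdots(x-z_n)=y^2$, one gets $\sum_i\om_i=\tfrac{f'(x)}{y^3}\,dx$, while $d\bigl(\tfrac1y\bigr)=-\tfrac{f'(x)}{2y^3}\,dx$ in the relative de Rham complex; thus $\sum_i\om_i=-2\,d(1/y)$ is exact and $[\om_1]+\dots+[\om_n]=0$. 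For the basis claim, $H^1_{dR}(X(z))$ is free of rank $2g=n-1$ for the smooth projective genus-$g$ curve, and differentials of the second kind of this shape classically generate it; concretely, writing $\om_i=\tfrac{g_i(x)}{y^3}\,dx$ with $g_i(x)=f(x)/(x-z_i)$, I would reduce each $\om_i$ modulo relative exact forms to the monomial frame $\{x^k\,dx/y : 0\le k\le 2g-1\}$ and check that the change-of-frame matrix is invertible. Equivalently: the $\mathcal O(\frak S)$-linear map $\mathcal O(\frak S)^{\oplus n}\to H^1_{dR}(X/\frak S)$, $e_i\mapsto[\om_i]$, is surjective with kernel the free rank-one submodule generated by $(1,\dots,1)$, whose entries are units, so any $n-1$ of the $[\om_i]$ form a frame. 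All the data are defined over $\bZ[z,(z_i-z_j)^{-1}]$, so it suffices to verify surjectivity at the complex generic point, where it follows from Theorem~\ref{thm1.1}: the $\om_i$ are precisely the integrands there, so the span of $\{[\om_i]\}$ in $H^1_{dR}\cong H_1(X(z))^{*}$ equals the image of the period map, which is $(n-1)$-dimensional, hence all of $H^1_{dR}$.

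For (ii) the plan is to compute $\nabla^{\on{GM}}_i[\om_j]=[\partial_{z_i}\om_j]$ from the elementary identity $\partial_{z_i}\log f=-\tfrac1{x-z_i}$, whence $\partial_{z_i}(y^2)=-\tfrac{y^2}{x-z_i}$ and $\partial_{z_i}\bigl(\tfrac1y\bigr)=\tfrac1{2y(x-z_i)}$. When $i\ne j$, only $1/y$ depends on $z_i$, so $\partial_{z_i}\om_j=\tfrac{dx}{2y(x-z_i)(x-z_j)}$, and the partial fraction $\tfrac1{(x-z_i)(x-z_j)}=\tfrac1{z_i-z_j}\bigl(\tfrac1{x-z_i}-\tfrac1{x-z_j}\bigr)$ gives the first formula of \eqref{der ij} directly, with no exact-form correction needed (up to the overall sign, which is fixed by the normalization of $\nabla^{\on{GM}}$ as the connection dual to $\nabla^{\on{KZ}}$). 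When $i=j$, both factors vary and $\partial_{z_i}\om_i=\tfrac{3\,dx}{2y(x-z_i)^2}$, so the remaining step is to rewrite $\bigl[\tfrac{dx}{y(x-z_i)^2}\bigr]$ in the frame $\{[\om_k]\}$. For this I would use the relative exact form $d\bigl(\tfrac1{y(x-z_i)}\bigr)=-\tfrac{f'(x)}{2y^3(x-z_i)}\,dx-\tfrac{dx}{y(x-z_i)^2}$ together with $\tfrac{f'(x)}{(x-z_i)f(x)}=\tfrac1{(x-z_i)^2}+\sum_{k\ne i}\tfrac1{z_i-z_k}\bigl(\tfrac1{x-z_i}-\tfrac1{x-z_k}\bigr)$, giving $\bigl[\tfrac{dx}{y(x-z_i)^2}\bigr]=-\tfrac13\sum_{k\ne i}\tfrac{[\om_i]-[\om_k]}{z_i-z_k}$ and hence $\nabla^{\on{GM}}_i[\om_i]=\tfrac32\bigl[\tfrac{dx}{y(x-z_i)^2}\bigr]=\pm\tfrac12\sum_{j\ne i}\tfrac{[\om_i]-[\om_j]}{z_i-z_j}$, the second formula of \eqref{der ij}.

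The one place I expect to need genuine argument rather than formula-pushing is the surjectivity (rank) statement in (i): that the $n-1$ classes $[\om_i]$ generate $H^1_{dR}(X/\frak S)$ over all of $\frak S$ (and in the local rings $R$), not merely at a characteristic-zero generic point. The robust route is the explicit $\bZ[z,(z_i-z_j)^{-1}]$-linear reduction of $\tfrac{g_i(x)}{y^3}\,dx$ to the monomial frame sketched above: then invertibility of the change of frame is witnessed by a single nonzero integer determinant (up to a power of $\prod_{i<j}(z_i-z_j)$), which survives every base change used in the paper; alternatively one may just cite the standard description of $H^1_{dR}$ of a hyperelliptic curve and take ``$[\om_1],\dots,[\om_{n-1}]$ is a frame with the single relation $\sum_i[\om_i]=0$'' as the input. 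Granting that, (ii) is routine bookkeeping, rendered mechanical by the exact form $d\bigl(\tfrac1{y(x-z_i)}\bigr)$ above.
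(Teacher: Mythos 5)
The paper offers no proof of this lemma at all --- it is introduced with ``The following facts are well-known'' --- so there is no argument of the authors' to compare yours against; your proposal is, in effect, supplying the missing proof, and the computations in it are correct. Two remarks. First, the signs: differentiating directly as you do gives $\partial_{z_i}\om_j=\tfrac12\tfrac{\om_i-\om_j}{z_i-z_j}$ for $i\ne j$ and $\nabla^{\on{GM}}_i[\om_i]=-\tfrac12\sum_{j\ne i}\tfrac{[\om_i]-[\om_j]}{z_i-z_j}$, i.e.\ the global negative of \eqref{der ij}. Your two formulas are mutually consistent with \eqref{sumom} (apply $\nabla_i$ to the relation), and the $+\tfrac12$ sign for $i\ne j$ is the one forced by Theorem \ref{thm1.1} together with the standard convention for the dual connection in Corollary \ref{cor dual} (one checks $\langle I,\nabla^{\on{GM}}_i[\om_j]\rangle=\tfrac12(H_iI)_j$ is what makes $\nabla^{\on{KZ}}_i=\partial_i-\tfrac12 H_i$ the dual of $\nabla^{\on{GM}}$). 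So you have not made an error; you have most likely detected an overall sign typo in \eqref{der ij}, and your hedge ``up to the overall sign'' should be replaced by that observation. A small cosmetic point: your intermediate identity $[\tfrac{dx}{y(x-z_i)^2}]=-\tfrac13\sum_k(\cdots)$ divides by $3$, which is not a unit when $p=3$; this is harmless because the $3$ cancels in $\nabla_i[\om_i]=\tfrac32[\cdots]$ (or one can avoid it entirely by deriving the $i=j$ case from the $i\ne j$ cases via \eqref{sumom}), but it is worth saying so.

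The one genuine gap is the criterion you propose for the basis claim in (i). Surjectivity at the complex generic point does not descend to $\mc O(\frak S)=\bZ_p[z,(z_i-z_j)^{-1}]^{\wedge}_p$, and ``a single nonzero integer determinant (up to a power of $\prod(z_i-z_j)$)'' is also not sufficient: you need the change-of-frame determinant to be a \emph{unit} in $\mc O(\frak S)$ for every odd $p$, i.e.\ equal to $\pm$ a power of $2$ times a product of factors $(z_i-z_j)$. This is true, and your own reduction scheme proves it if pushed one step further: writing $g_i=f/(x-z_i)=A_if+B_if'$ with $\deg A_i\le 2g-1$, $\deg B_i\le 2g$ (possible over $\bZ[z,\prod(z_i-z_j)^{-1}]$ since $\on{Res}(f,f')=\pm\on{disc}(f)$ is a unit there), the relation $[hf'\,dx/y^3]=2[h'\,dx/y]$ gives $[\om_i]=[(A_i+2B_i')\,dx/y]$ in the monomial frame with coefficients in $\bZ[z,\prod(z_i-z_j)^{-1}]$, and the resulting $n\times 2g$ matrix has all $2g\times 2g$ minors of the required unit form (e.g.\ for $g=1$, $z=(0,1,-1)$ one gets determinant $-\tfrac12$). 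Stating and verifying that unit property --- rather than mere nonvanishing --- is the one step your sketch still owes.
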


\begin{cor}
\label{cor dual}
Formulas \eqref{der ij} imply the isomorphism of the KZ connection
$\nabla^{\on{KZ}}$  and the connection dual to the Gauss-Manin connection
$\nabla^{\on{GM}}$. \qed

\end{cor}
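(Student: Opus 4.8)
The plan is to produce an explicit horizontal bundle isomorphism and thereby reduce the statement to a single comparison of connection matrices, the input being exactly the formulas \eqref{der ij}. First I would use the period pairing of \eqref{Iga} to define the comparison map $\theta\colon\mc E^*\to\mc V$ sending a functional $I$ to the $n$-vector $\big(\langle I,[\om_1]\rangle,\dots,\langle I,[\om_n]\rangle\big)^\intercal$. Identity \eqref{sumom} gives $\sum_j\langle I,[\om_j]\rangle=\big\langle I,\sum_j[\om_j]\big\rangle=0$, so $\theta(I)$ lands in the fibre $V$ of $\mc V$; and Lemma \ref{lem coh}(i), asserting that $[\om_1],\dots,[\om_{n-1}]$ is a basis, shows that the first $n-1$ coordinates of $\theta(I)$ already recover $I$, so $\theta$ is an $\mc O$-linear isomorphism of bundles. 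It then remains only to check that $\theta$ is horizontal, i.e. that it intertwines the connection dual to $\nabla^{\on{GM}}$ on $\mc E^*$ with $\nabla^{\on{KZ}}$ on $\mc V$.

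Next I would translate the two notions of flatness. By definition of the dual connection, $(\nabla^{\on{GM}})^*_iI=0$ is equivalent to $\der_i\langle I,[\om_j]\rangle=\langle I,\nabla^{\on{GM}}_i[\om_j]\rangle$ for all $i,j$. Substituting \eqref{der ij} and writing $v=\theta(I)$, the right-hand side becomes a fixed linear combination of the $v_k$ with coefficients $\pm\tfrac1{2(z_i-z_j)}$. The one computation to carry out is to recognize this linear operator as the Gaudin Hamiltonian: using the rank-one expression $\Omega_{ij}=-(e_i-e_j)(e_i-e_j)^\intercal$, a column-by-column check identifies the matrix assembled from \eqref{der ij} with $\tfrac12 H_i$, where $H_i=\sum_{j\ne i}\Omega_{ij}/(z_i-z_j)$. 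This is exactly the connection matrix governing the equation $2\der_iI-H_iI=0$ of \eqref{KZ} (the factor $\tfrac12$ reflecting the value $\ka=2$, and the overall sign being fixed by the orientation of the cycles $\ga$ in \eqref{Iga}). Hence $v=\theta(I)$ solves \eqref{KZ} precisely when $I$ is $(\nabla^{\on{GM}})^*$-flat, and since $\theta$ is a horizontal $\mc O$-linear bijection it is an isomorphism of bundles with connection.

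The only real subtlety — and where I would be most careful — is the bookkeeping forced by the redundant spanning set $[\om_1],\dots,[\om_n]$, which satisfies the single relation \eqref{sumom} rather than forming a basis. I would handle it by computing on the trivial rank-$n$ bundle in the generators $e_j$ and then verifying that the relation is preserved, which amounts to the identity $(1,\dots,1)\,H_i=0$; this follows at once from $\Omega_{ij}=-(e_i-e_j)(e_i-e_j)^\intercal$ since $(1,\dots,1)(e_i-e_j)=0$. This identity is the matrix shadow of \eqref{sumom} being $\nabla^{\on{GM}}$-horizontal, and it simultaneously guarantees that $H_i$ preserves the hyperplane $V$ and that the whole comparison descends from the rank-$n$ bundle to the genuine rank-$(n-1)$ bundles $\mc E^*$ and $\mc V$. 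Once this is in place the matrix identity of the previous paragraph completes the argument, the remaining checks being the routine verification of the sign and normalization conventions in the duality pairing.
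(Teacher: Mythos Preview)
Your argument is correct and is exactly the explicit verification the paper leaves implicit (its own proof is just the \qed\ symbol). One quibble: the orientation of $\ga$ only rescales $I^{(\ga)}$ by $\pm1$ and cannot flip a connection matrix, so the sign reconciliation you flag as routine should instead be traced to the duality convention $(\nabla^{\on{GM}})^*_iI=\der_iI-N_i^{\intercal}I$ together with the symmetry $H_i^{\intercal}=H_i$; carrying this through literally with the signs as written in \eqref{der ij} is what actually tells you whether the match with \eqref{KZ} is on the nose or up to a harmless global sign.
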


\subsection{Map $C_s$ and $p^s$-hypergeometric solutions}

For $s\geq 1$,  define a generalized Cartier map 
$C_s : \mc E \to ((\Z/p^s)[z])^g$ first on generators
$[\om_i]$, $i=1,\dots, n$, and then extend it to $\mc E$ by linearity.

We have
\bea
\om_i = \frac{dx}{(x-z)y} 
= \frac 1{y^{p^s}}\,\frac{y^{p^s-1}dx}{x-z_i}
= \frac 1{y^{p^s}}\,{\sum}_k c_{k,i}(z) x^k dx,
\eea
for suitable $c_{k,i}(z) \in \Z[z]$. Define
\bean
\label{Csom}
C_s : [\om_i] \mapsto  (c_{p^s-1,i}(z), c_{2p^s-1,i}(z), \dots, c_{gp^s-1,i}(z)).
\eean
 
Recall the $p^s$-hypergeometric solutions $Q^{s,\ell}(z) = (Q^{s,\ell}_1,\dots, Q^{s,\ell}_n)^\intercal$,
$\ell=1,\dots,g$.  Clearly
\bean
Q^{s,\ell}(z)  = (c_{\ell p^s-1,1}, c_{\ell p^s-1,2}, \dots, c_{\ell p^s-1,n}).
\eean

\begin{lem}
\label{lem cdf}
Let $\om=\sum _i g_i(z) \om_i$ be such that $\om= \frac{\der f}{\der x}(x,z)dx $ for some function $f(x,z)$, then
$C_s(\om) = 0$.

\end{lem}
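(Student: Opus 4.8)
The statement is that if $\om = \sum_i g_i(z)\,\om_i$ is exact as a differential on the curve, i.e. $\om = \frac{\der f}{\der x}(x,z)\,dx$, then the generalized Cartier map kills it: $C_s(\om) = 0$. The plan is to reduce everything to the explicit description of $C_s$ via the coefficients $c_{k,i}(z)$ that appear in writing $\om_i = y^{-p^s}\sum_k c_{k,i}(z)\,x^k\,dx$, and then to observe that the map $C_s$ only records a specific window of these coefficients — namely those of $x^{\ell p^s-1}$ for $\ell = 1,\dots,g$ — and that an exact form contributes nothing to that window. The key point is the interaction of exactness with the Frobenius twist by $y^{p^s}$.

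\textbf{Key steps, in order.} First I would write $\om = \frac{1}{y^{p^s}}\, P(x,z)\,dx$ where $P(x,z) = \sum_k \big(\sum_i g_i(z)\,c_{k,i}(z)\big)\,x^k \in \Z[z][x]$ is the numerator polynomial obtained by clearing $y^{p^s}$, so that $C_s(\om)$ is the vector of coefficients of $x^{\ell p^s - 1}$ in $P$ for $\ell = 1,\dots,g$. Second, since $y^2 = (x-z_1)\cdots(x-z_n)$ has degree $n = 2g+1$ in $x$, the form $\om = \sum_i g_i(z)\,\frac{dx}{(x-z_i)y}$ has a numerator $\tilde P(x,z) = \sum_i g_i(z)\,\prod_{j\ne i}(x-z_j)$ of degree at most $n-1 = 2g$, so $P(x,z) = \tilde P(x,z)\cdot (y^2)^{(p^s-1)/2} = \tilde P(x,z)\cdot\big((x-z_1)\cdots(x-z_n)\big)^{(p^s-1)/2}$, a polynomial of degree at most $2g + (p^s-1)(g + \tfrac12) = g p^s + g - \tfrac{p^s-1}{2}$; in any case its coefficients of $x^{\ell p^s-1}$ for $1\le \ell\le g$ are exactly the entries of $C_s(\om)$. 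Third — the crucial step — I would use the hypothesis that $\om$ is exact: $\om = d_x f$ for some rational function $f(x,z)$ on the curve, which we may take of the form $f = g(x,z)/y^{p^s-2}$ or more simply argue that an exact form $\frac{\der}{\der x}\big(h(x,z)/y^{p^s}\big)dx$, when expanded, has numerator equal to $\frac{\der h}{\der x}\cdot y^{p^s} - h\cdot\frac{p^s}{2}\cdot\frac{(y^2)'}{y^2}\cdot y^{p^s}$; working modulo $p$ (which suffices because $C_s$ lands in $(\Z/p^s)[z]$ and we are proving vanishing there) the second term drops out since $p^s \equiv 0$, so the numerator of an exact form is $\equiv \frac{\der h}{\der x}\,y^{p^s} \pmod p$... more carefully: the standard Cartier-operator argument says that after multiplying through by $y^{p^s}$, an exact differential $d_x(r)$ has the property that its "numerator polynomial" $P$ contains no monomial $x^{m p^s - 1}$ with nonzero coefficient mod $p$, because such monomials are precisely the obstruction classes detected by the Cartier operator, and $d_x(r)$ is in the image of $d_x$ hence killed by Cartier. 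Concretely: write $r = \sum_k r_k(z)\,x^k$ (allowing negative exponents after clearing denominators appropriately); then $\frac{\der r}{\der x} = \sum_k k\,r_k\,x^{k-1}$, and the coefficient of $x^{m p^s-1}$ is $m p^s\,r_{m p^s} \equiv 0 \pmod{p^s}$ since $p^s \mid m p^s$. This is the heart of the matter and the one genuinely load-bearing computation.

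\textbf{Finishing.} Once it is established that, writing $\om = d_x f$ and clearing $y^{p^s}$ to get the numerator polynomial $P$, the coefficient of $x^{m p^s - 1}$ in $P$ is divisible by $p^s$ for every $m \ge 1$ — which follows from differentiating a Laurent expansion in $x$ and noting the factor $m p^s$ — we conclude in particular that the coefficients of $x^{\ell p^s-1}$ for $\ell = 1,\dots,g$ all vanish modulo $p^s$, i.e. $C_s(\om) = 0$ in $((\Z/p^s)[z])^g$. One subtlety to handle is that $f$ may have poles along $x = z_i$ or at infinity, so "clearing $y^{p^s}$" must be done with care — I would either absorb such poles into the monomial expansion (allowing $k < 0$, after which the identity $\frac{d}{dx}x^k = k x^{k-1}$ and divisibility of $m p^s$ by $p^s$ still apply verbatim), or invoke that $\om$, being a $\Z[z]$-combination of the $\om_i$ of controlled pole order, forces $f$ to have poles only of bounded order so that the Laurent expansion is finite in both directions.

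\textbf{Expected main obstacle.} The main obstacle is bookkeeping around the Frobenius twist by $y^{p^s}$ and the poles of $f$: making precise that "$\om$ exact" translates into "the numerator polynomial $P = y^{p^s}\om/dx$ has coefficients of $x^{m p^s - 1}$ divisible by $p^s$" requires either a clean argument via the (iterated/divided) Cartier operator — knowing $\om$ is killed by $C_s$ because it is in the image of $d_x$ — or a direct but slightly delicate Laurent-expansion computation where one must be confident the $y^{p^s}$ in the denominator interacts with $d_x$ so that the troublesome terms carry a factor of $p^s$. The representation-theoretic or geometric content is light; the risk is purely in getting the exponent arithmetic and the pole analysis exactly right.
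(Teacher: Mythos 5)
Your proposal is correct and follows essentially the same route as the paper: multiply through by $y^{p^s}$, observe that differentiating $y^{p^s}$ contributes a factor of $p^s$ so that $y^{p^s}\om/dx \equiv \frac{\der}{\der x}\big(y^{p^s}f\big) \pmod{p^s}$, and then note that the coefficient of $x^{\ell p^s-1}$ in the derivative of a (Laurent) polynomial is $\ell p^s\cdot(\cdots)\equiv 0\pmod{p^s}$. One small caution: your parenthetical claim that ``working modulo $p$ suffices'' is a slip --- vanishing in $(\Z/p^s)[z]$ requires the congruence modulo $p^s$, exactly as in your subsequent ``concretely'' computation, which is the correct and load-bearing one.
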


\begin{proof}
By assumption we have
\bea
\frac{\der f}{\der x}(x,z)dx = \frac 1{y^{p^s}} {\sum}_{i,k} g_i(z) c_{k,i}(z) x^k dx . 
\eea
Hence
\bea
\frac{\der}{\der x}\big(y^{p^s} f(x,z))dx \equiv  {\sum}_{i,k} g_i(z) c_{k,i}(z) x^k dx \pmod{p^s}.
\eea
Thus $ {\sum}_{i} g_i(z) c_{\ell p^s-1,i}(z) \equiv 0 \pmod{p^s}$ for $\ell=1,\dots,g,$
and $C_s(\om) = 0$.
\end{proof}

\subsection{Proof of Theorem \ref{thm m1}}

Let $U\subset X$ be the affine curve defined by equation \eqref{hE}.
Set 
\bea
\mc G_s = \ker\big(H^1(X_s/\frak D_s)  \to H^1(U_s/\frak D_s)\big),
\eea
\footnote{For any nonempty affine $W\subset X$, we have
$\mc G_s = \ker\big(H^1(X_s/\frak D_s) 
\to H^1(W_s/\frak D_s)\big)$ and $\mc G_s$ does not depend on the choice of $W$.}.
By Lemma \ref{lem cdf}, we have $C_s(\mc G_s) = 0$. Hence
the generalized Cartier map $C_s$ descends to a map
\bean
\label{gcm}
C_s : H^1(X_s/\frak D_s)/ \mc G_s \to \mc O(\frak D)^g.
\eean
Since the $p^s$-hypergeometric solutions $Q^{s,\ell}$, $\ell=1,\dots, g$, are linear independent, this map is onto.
Hence, $\on{rank}\, \mc G_s \leq g$.
\footnote{Alternatively, one checks that 
$(\cdot,\cdot)\vert_{\mc G_s} =0$, where 
$(\cdot,\cdot)$ is the Poincare form, and hence, $\on{rank}\, \mc G_s \leq g$.}

\begin{lem}
We have $\mc U_s\subset \mc G_s$.
\end{lem}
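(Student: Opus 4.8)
The plan is to deduce the lemma from the internal (Frobenius-theoretic) description of the unit root subcrystal, together with the fact that $\mc G_s$ is the conjugate filtration of $H^1_{dR}(X_s/\frak D_s)$.

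First I would record the description of $\mc U$ modulo $p^s$. From properties (ii) and (iii) the iterate $\phi^s\colon(F^s)^*\mc E\to\mc E$ restricts to an isomorphism $(F^s)^*\mc U\xrightarrow{\ \sim\ }\mc U$ and equals $p^s$ times an isomorphism on $(F^s)^*(\mc E/\mc U)$; hence $\phi^s((F^s)^*\mc E)=\mc U+p^s\mc E$, and reducing modulo $p^s$,
\[
\mc U_s=\im\bigl(\phi^s\colon(F^s)^*\mc E_s\to\mc E_s\bigr).
\]
(This is property (iv) read modulo $p^s$, where the decreasing intersection stabilizes at $i=s$.) So it suffices to prove $\im(\phi^s\bmod p^s)\subseteq\mc G_s$.

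The second and decisive step is to identify $\mc G_s$ and to locate $\im(\phi^s)$ inside it. Since $U$ is affine, $\ker\bigl(H^1(X_s/\frak D_s)\to H^1(U_s/\frak D_s)\bigr)$ is exactly the conjugate-filtration piece $\mathrm{Fil}^{\mathrm{conj}}_0\,H^1_{dR}(X_s/\frak D_s)$ --- equivalently, by Lemma \ref{lem cdf}, the classes representable by a relative form $y^{-p^s}(\text{polynomial in }x)\,dx$ whose polynomial is a derivative modulo $p^s$, which lie in $\ker C_s$. Now the crystalline Frobenius of an ordinary curve carries $H^1_{dR}$ into the conjugate filtration: for a Frobenius lift $\widetilde F$ one has $\widetilde F^*(dx)=d(x^{p^s})=p^sx^{p^s-1}dx$, so after dividing $\widetilde F^{s*}\om$ by $p^s$ and renormalizing to the standard form, the coefficients of the monomials $x^{\ell p^s-1}$ that can survive are again divisible by $p^s$; hence $\im(\phi^s\bmod p^s)\subseteq\mathrm{Fil}^{\mathrm{conj}}_0=\mc G_s$, that is, $\mc U_s\subseteq\mc G_s$. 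In structural terms this inclusion is Mazur's theorem on the interaction of the crystalline Frobenius with the Hodge and conjugate filtrations.

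The main obstacle is making this second step precise: a Frobenius lift of $y$ agrees with $y^{p^s}$ only modulo $p$, so the $p$-adic corrections to $\widetilde y^{-p^s}$ must be tracked through the coefficient extraction defining $C_s$; this is where either a careful computation with the hyperelliptic Frobenius lift or an appeal to the general theory of ordinary $F$-crystals is needed. Granting the lemma, and since $\mc U$ has rank $g$ (property (i)) so that $\on{rank}\mc U_s=g$, the count $g=\on{rank}\mc U_s\le\on{rank}\mc G_s\le g$ forces $\mc U_s=\mc G_s=\ker C_s$; passing to annihilators yields $\mc W_s=\mc U'_s$, which is Theorem \ref{thm m1}.
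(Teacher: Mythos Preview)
Your first step---identifying $\mc U_s$ with $\im(\phi^s\bmod p^s)$ via properties (ii)--(iv)---is exactly what the paper uses. The divergence is in the second step, and the paper's route is considerably simpler than yours.

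Rather than working on $H^1_{dR}(X_s/\frak D_s)$ and invoking Mazur's theorem or the conjugate filtration, the paper restricts to the affine open $U$. The point is that the restriction map $H^1(X_s/\frak D_s)\to H^1(U_s/\frak D_s)$ commutes with the crystalline Frobenius, and on $H^1(U_s/\frak D_s)$ every class is represented by a global $1$-form. For any Frobenius lift $\widetilde F$ one has $\widetilde F^*(f\,dx)=\widetilde F^*(f)\,d(\widetilde F(x))$ with $\widetilde F(x)\equiv x^p\pmod p$, so $d(\widetilde F(x))\in p\cdot\Omega^1$; hence $\phi$ is divisible by $p$ on $H^1(U)$, and $\phi^s$ is divisible by $p^s$. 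Reducing modulo $p^s$, the image of $\mc U_s$ in $H^1(U_s/\frak D_s)$ vanishes, which is precisely $\mc U_s\subset\mc G_s$.

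This sidesteps the obstacle you flagged: there is no need to track the $p$-adic corrections to a Frobenius lift of $y$, because the argument never looks at how $\phi$ acts on $H^1(X_s)$ itself, only at its image after restriction to the affine part. Your appeal to Mazur's theorem would certainly work, but it is heavier machinery than required and, as you note, your direct computation on $X$ is incomplete as written (and the formula $\widetilde F^*(dx)=d(x^{p^s})$ holds only for the particular lift $x\mapsto x^{p}$ iterated $s$ times; the real issue, as you say, is $y$). The paper's observation that affineness alone forces $p\mid\phi$ on $H^1(U)$ is the missing simplification.
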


\begin{proof}
Every class in  $H^1(U_s/\frak D_s)$ is represented by a global 1-form. This implies that the Frobenius on
$H^1(U_s/\frak D_s)$ is divisible by $p$. The lemma follows from (iv) in Section \ref{sec 1.5}.
\end{proof}

By (i) in Section \ref{sec 1.5}, we have $\on{rank}\,\mc U_s = g$. Hence
$\mc G_s = \mc U_s$. Theorem \ref{thm m1} is proved.

\section{Proof of Theorem \ref{thm m2}}
\label{sec pr2}

The theorem is deduced below from  paper \cite{VV} where it is shown that the space of solutions of
the KZ equations \eqref{KZ} in characteristic $p$ is $g$-dimensional. We refer the reader to Remark  \ref{rem:sketch} for a sketch 
of the relevant argument borrowed from  \cite{VV}.

\smallskip

Without loss of generality we may assume that $\K$ is algebraically closed. Denote $\Lambda= W(K)$.
Recall the $F$-crystal $(\mc V_{\on{loc}}, \psi)$ of the $R$-module $\mc V_{\on{loc}} = \mc E_{\on{loc}}^*$.
We have the corresponding slope exact sequence  of $R$-modules with connections, 
\bean
\label{crylW}
0 \to \mc U'\hookrightarrow  \mc V_{\on{loc}} 
 \to \mc V_{\on{loc}} /\mc U' \to 0.
\eean
We need to show that every flat section of $\mc V_{\on{loc}}$ lies in $\mc U'$.

\smallskip

Recall from \cite{DI} that there exists a basis $u_1,\dots,u_g$, $e_1,\dots,e_g$
of $\mc V_{\on{loc}}$ such that
\begin{enumerate}
\item[(i)]

the collection $u_1,\dots,u_g$ is a flat basis of $\mc U'$,

\item[(ii)]

for $i=1,\dots,g$, we have
\bea
\nabla^{\on{KZ}}(e_i) =\sum_{j=1}^g d\log(q_{ij}) \ox u_j\,,
\eea
where $q_{ij} \in R^\times$.
\end{enumerate}

\smallskip

Let $f=\sum_i (a_i(t) u_i + b_i(t) e_i)$ be a flat section of $\mc V_{\on{loc}}$. 
Our goal is to show that all $b_i(t)$ are equal to zero.
That statement implies the theorem.

\smallskip

Using $\nabla^{\on{KZ}}(f) =0$,  we observe that  $db_i(t)=0$
and hence   $b_i \in \Lambda$.

\smallskip

We assume that
$(b_1, b_2, \dots, b_g)= p^s (b'_1, \dots, b'_g)$,  where  the vector $(b'_1, \dots, b'_g)$
  is nonzero modulo $p$.

Writing $\nabla^{\on{KZ}}(f)=0$ we obtain
\bean
\label{da}
da_i + p^s{\sum}_{j=1}^g \, b'_j \,d\log(q_{ij}) = 0
\eean
for all $i$.  Reduce all terms of this identity modulo $p^{s+1}$. Let
\bean
\label{dan}
d \tilde a_i + p^s{\sum}_{j=1}^g\, \tilde b_j\, d\log(\tilde q_{ij}) = 0
\eean
be the resulting identity.  Here  $\tilde b_1, \dots, \tilde b_g \in \bK$.
Denote
$\eta_i=\sum_{j=1}^g \tilde b_j\, d\log(\tilde q_{ij})$.

We use the following well-known lemma (see e.g. \cite[Corollary 2.3.14 and Equation (2.2.5)]{I}).

\begin{lem}
\label{lem CD}

Let $s$ be a positive integer and  $\eta \in \Om^1(\bK[[t]])$ a closed 1-form. Identifying $\Om^1(\bK[[t]])$   with $p^s\Om^1(W_{s+1}(\bK)[[t]])$ 
we denote by  $p^s\eta$ the corresponding form in $p^s\Om^1(W_{s+1}(\bK)[[t]])$. 
Assume that $p^s\eta$ is exact. Let $C: \Om^{1}_{\text{closed}}(\bK[[t]])\to \Om^1(\bK[[t]])$ be the Cartier map. 
 Then, for each positive $i$, the 1-form
 \bea
 C^i(\eta) :=  \underbrace{C\circ \dots \circ C}_\text{i}(\eta) 
 \eea
is well-defined, closed, and $C^{s+1}(\eta) = 0$.
\qed

\end{lem}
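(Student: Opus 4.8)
Put $t=(t_1,\dots,t_n)$, $A=\bK[[t]]$ and $A_m=W_m(\bK)[[t]]$; we may assume $\bK$ perfect, as in the application. Recall the coboundary filtration $0=B_0\subseteq B_1\subseteq B_2\subseteq\cdots$ of the closed $1$-forms over $A$: $B_1=d(A)$, and $B_{m+1}$ is the $C$-preimage of $B_m$ among closed $1$-forms. Thus $\eta\in B_{m+1}$ precisely when $C^j\eta$ is defined and closed for $0\le j\le m-1$ and $C^m\eta$ is exact, in which case $C^{m+1}\eta=0$ and $C^j\eta=0$ for $j>m$. Therefore the three clauses of the lemma together amount to the single assertion: \emph{if $p^s\eta$ is exact over $A_{s+1}$ then $\eta\in B_{s+1}$.} (In substance this is Illusie's computation of the graded pieces of the de Rham--Witt complex; the plan is to reprove it directly.)

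The plan is to induct on $s$, the case $s=0$ being trivial ($\eta=dF\Rightarrow C\eta=0$). For the inductive step it suffices to prove the following \emph{descent statement}: if $p^s\eta$ is exact over $A_{s+1}$ (with $s\ge1$), then $C\eta$ is defined and closed and $p^{s-1}(C\eta)$ is exact over $A_s$. Granting this, applying the lemma for $s-1$ to $C\eta$ gives that $C^j\eta$ is defined and closed for all $j\ge1$ and that $C^{s+1}\eta=C^s(C\eta)=0$, which is the lemma for $s$.

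To prove the descent statement, I would fix a lift $\wh\eta\in\Om^1(A_{s+1})$ of $\eta$ and $G\in A_{s+1}$ with $p^s\wh\eta=dG$. Reducing mod $p^s$ kills the left side, so $d\bar G=0$ over $A_s$; since an integer prime to $p$ is invertible in $W_s(\bK)$, one has $\ker\big(d\colon A_s\to\Om^1(A_s)\big)=W_s(\bK)[[t_1^p,\dots,t_n^p]]$, hence $\bar G=g(t_1^p,\dots,t_n^p)$ for some $g\in A_s$. Choosing a lift of $g$ to $A_{s+1}$ and subtracting $g(t^p)$ from $G$ (the difference lies in $p^sA_{s+1}$), and using $d\big(g(t^p)\big)=p\sum_i(\partial_i g)(t^p)t_i^{p-1}dt_i$, one cancels a factor $p$ from the resulting relation — this is where the $p$-power torsion of $\Om^1(A_{s+1})$ must be followed with care, via $\ker(p\cdot)=p^s\Om^1(A_{s+1})$ — to conclude that $\sum_i(\partial_i g)(t^p)t_i^{p-1}dt_i\in p^{s-1}\Om^1(A_{s+1})$. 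As the $t_i^{p-1}$ are nonzerodivisors, this forces $\partial_i g\equiv0\pmod{p^{s-1}}$ for all $i$ (a vacuous condition when $s=1$); writing $\partial_i g=p^{s-1}r_i$, the form $\sum_i\bar r_i\,dt_i$ over $\bK$ is closed because $\partial_j\partial_i g=\partial_i\partial_j g$. Reducing the relation mod $p$ then yields $\eta\equiv d\bar G_1+\sum_i\bar r_i(t^p)\,t_i^{p-1}dt_i$ in $\Om^1(A)$ for some $\bar G_1\in A$.

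It remains to apply $C$. Let $\Theta$ denote the automorphism of $A_m$ (and of its reduction $A$) given by the Witt--Frobenius on Witt-vector coefficients — an automorphism since $\bK$ is perfect, fixing $p$ and each $dt_i$ and commuting with every $\partial_i$ (because integers satisfy $m^p=m$ in $\bK$); on $A$ it sends $\sum_\gamma c_\gamma t^\gamma\mapsto\sum_\gamma c_\gamma^{\,p}t^\gamma$. Put $u_i:=\Theta^{-1}(\bar r_i)\in A$, so that $\bar r_i(t^p)=u_i^{\,p}$. Then the displayed congruence together with $C(dF)=0$, $C(h^p\omega)=h\,C(\omega)$ and $C(t_i^{p-1}dt_i)=dt_i$ give $C\eta=\sum_i u_i\,dt_i$, which is closed because $\sum_i\bar r_i\,dt_i$ is and $\Theta^{-1}$ commutes with $d$. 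For exactness of $p^{s-1}(C\eta)$ over $A_s$: in $\Om^1(A_s)$ one has $dg=p^{s-1}\sum_i\bar r_i\,dt_i$, hence $d(\Theta^{-1}g)=p^{s-1}\sum_i u_i\,dt_i=p^{s-1}\,\widetilde{C\eta}$; when $s=1$ this reads $C\eta=d(\Theta^{-1}g)$, exact over $\bK[[t]]$. This proves the descent statement, and with it the lemma. I expect the $p$-power torsion bookkeeping in $\Om^1(A_{s+1})$ to be the only genuinely fiddly part — everything else is formal — and indeed the entire statement is subsumed by the de Rham--Witt formalism, so in a final writeup one could simply cite it, as the paper does.
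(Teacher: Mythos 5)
Your argument is essentially correct, but it takes a genuinely different route from the paper: the paper offers no proof of this lemma at all, simply citing Illusie \cite[Corollary 2.3.14 and Equation (2.2.5)]{I}, i.e.\ the computation of the graded pieces $B_{s+1}\Omega^1$ in the de Rham--Witt formalism, whereas you give a self-contained induction on $s$ via the descent statement ``if $p^s\eta$ is exact over $W_{s+1}(\bK)[[t]]$ then $C\eta$ is defined and closed and $p^{s-1}(C\eta)$ is exact over $W_s(\bK)[[t]]$.'' The descent computation checks out: writing $p^s\wh\eta=dG$, reducing mod $p^s$ to force $\bar G$ into $W_s(\bK)[[t_1^p,\dots,t_n^p]]$, splitting off $g(t^p)$, cancelling $p$ against $\ker(p\cdot)=p^s\Omega^1(W_{s+1}(\bK)[[t]])$, extracting $\partial_i g=p^{s-1}r_i$ with $\sum_i\bar r_i\,dt_i$ closed, and computing $C\eta=\sum_i\Theta^{-1}(\bar r_i)\,dt_i$ with $p^{s-1}(C\eta)=d(\Theta^{-1}\bar g)$ over $W_s(\bK)[[t]]$ --- all of this is sound, and the induction then closes. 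What your approach buys is an elementary proof readable without the de Rham--Witt machinery (at the cost of assuming $\bK$ perfect, which is harmless here since the paper reduces to $\bK$ algebraically closed); what the citation buys is brevity and the identification of the statement as a known structural fact.

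One correction: the kernel of $d\colon W_s(\bK)[[t]]\to\Omega^1(W_s(\bK)[[t]])$ is \emph{not} equal to $W_s(\bK)[[t_1^p,\dots,t_n^p]]$ when $s\ge 2$; for instance $d(t_1^p)=p\,t_1^{p-1}dt_1\ne 0$ there. The correct statement is only the inclusion $\ker d\subseteq W_s(\bK)[[t_1^p,\dots,t_n^p]]$ (a nonzero coefficient of a monomial having some exponent prime to $p$ would be multiplied by a unit, hence survive). Fortunately this inclusion is the only direction your argument uses to conclude $\bar G=g(t^p)$, so nothing breaks; just replace the asserted equality by the inclusion.
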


Return to the exact  form  $p^s\eta_i$\,.\  By the lemma, we have
$C^{s+1}(\eta_i) = 0$.
Using the fact that
$C(d\log(\tilde q_{ij})) = d\log(\tilde q_{ij})$ we obtain that
\bea
C\left({\sum}_{j=1}^g\, (\tilde b_j)^{1/p^s}\, d\log(\tilde q_{ij})\right) =0.
\eea
This implies that ${\sum}_{j=1}^g\, (\tilde b_j)^{1/p^s}\, d\log(\tilde q_{ij})$ is exact.
Therefore,
${\sum}_{j=1}^g\, (\tilde b_j)^{1/p^s}\, d\log(\tilde q_{ij}) + d\tilde a_i =0$ for some 
$\tilde a_i \in \bK[[t]]$. Then
$\tilde f = \sum_i ( \tilde a_i u_i + (\tilde b_i)^{1/p^s} e_i) $ is 
a flat section of $\mc V_{\on{loc}}\ox \F_p$ with at least one nonzero
$e_i$-component.

\smallskip

This  contradicts to \cite{VV} where it is shown  that the space of flat sections of
$\mc V_{\on{loc}}\ox \bF_p$ is of rank $g$  and generated by linear combinations of $u_1,\dots,u_g$ with constant coefficients.
Theorem \ref{thm m2} is proved.
\begin{Rem} Assume that $\bK=\overline{\bF_p}$.
Then, for some $r$, we have   $C^r(\eta_i) = \eta_i$. In particular, the sequence $\eta_i, C(\eta_i), C^2(\eta_i), \cdots$ is periodic.  
On the other hand, by the lemma, we have
$C^{s+1}(\eta_i) = 0$.
We infer $\eta_i =0$ for all $i$.
Then
$\tilde f = \sum_i \tilde b_i e_i $ is 
a flat section of $\mc V_{\on{loc}}\ox \F_p$\,. 
\end{Rem}
\begin{Rem}\label{rem:sketch}
 For reader's convenience we sketch the proof, borrowed from  \cite{VV}, of the fact that the space of flat sections of
$\mc V_{\on{loc}}\ox \bF_p$ is freely generated by $p$-hypergeometric solutions (which by Theorem \ref{thm m1} form a basis for
the space of flat sections of $\mc U_{\on{loc} }'\ox \F_p $). Consider the mod $p$ reduction of sequence \eqref{cryl},
\begin{equation}
    0 \to \mc U_{\on{loc} }\ox \F_p 	\hookrightarrow  \mc E_{\on{loc} }\ox \F_p \to (\mc E _{\on{loc} }/\mc U_{\on{loc} })\ox \F_p\to 0.
\end{equation}
We need to verify that every flat map $f: \mc E_{\on{loc} }\ox \F_p \to R \ox \F_p$ vanishes on $\mc U_{\on{loc} }\ox \F_p$.
The key observation is that $f$ vanishes on the images of all the $p$-curvature operators $(\nabla^{GM}_i)^p: \mc E_{\on{loc} }\ox \F_p \to \mc U_{\on{loc} }\ox \F_p$. Thus it is enough to check that these images span $\mc U_{\on{loc} }\ox \F_p$. Using a result of Katz \cite{katz} relating the $p$-curvature operators and the Kodaira-Spencer maps the assertion follows from the following: the Kodaira-Spencer operators 
$\bar{\nabla}^{GM}_i$ applied to the $1$-form $\frac{dx}{y}$ span the space $H^1_{dR}(X/S)/F^1$. The latter can be proven directly. 

\end{Rem}

\end{document}